\newtheorem{theorem}{Theorem}[section]
\newtheorem{lemma}[theorem]{Lemma}
\newtheorem{proposition}[theorem]{Proposition}
\newtheorem{corollary}[theorem]{Corollary}
\newtheorem{definition}[theorem]{Definition}
\newtheorem{remark}[theorem]{Remark}
\newtheorem{conjecture}[theorem]{Conjecture}
\begin{document}
\title[Affine-triangular automorphisms]{Dynamical degrees of affine-triangular automorphisms in dimension four}
\author{Enbo Shao, Xiaosong Sun}
\address{Enbo Shao: Foundation Department, Liaoning Institute of Science and Technology, Benxi, Liaoning, 117004, China}
\email{shaoenbo@lnist.edu.cn}
\address{Xiaosong Sun: School of Mathematics, Jilin University, Changchun, Jilin, 130012, China}
\email{sunxs@jlu.edu.cn}
\subjclass[2020]{37F10, 14R10}
\keywords{Dynamical degrees, polynomial automorphisms}	
	\begin{abstract}
		In this paper, let \( \mathbb{A}^n_k \) be the affine $n$-space over an arbitrary field $k$. We show that dynamical degrees of affine-triangular automorphisms of \( \mathbb{A}^4_k \) are algebraic integers of degree $\le4$. As a consequence, if $\mathrm{char}(k)\neq2$, then dynamical degrees of qudratic automorphisms of $\mathbb{A}_k^4$ are algebraic integers of degree $\le4$. We also explore some results in higher dimensions.
        These results partially give the affirmative answer to the Conjecture in \cite{dang2021spectral}.
	\end{abstract}   
    
    \maketitle	
    
	\section{Introduction}
    \subsection{Dynamical degree}
     Let \( k \) be an arbitrary field, and let \( \mathbb{A}^n_k \) denote the affine \( n \)-space over \( k \).
     For each integer \( n \geq 1 \), a polynomial map \( f \in \mathrm{End}(\mathbb{A}^n_k) \) is an endomorphism of affine space defined by a tuple of polynomials:
     \[f: (x_1, \dots, x_n) \mapsto \big(f_1(x_1, \dots, x_n), \dots, f_n(x_1, \dots, x_n)\big)\]
     where $f_i \in k[x_1, \dots, x_n]$.
     For simplicity, we write \( f = (f_1, \dots, f_n) \) and write \( \mathrm{Aut}(\mathbb{A}^n_k) \) for the group of polynomial automorphisms, that is, bijective endomorphisms with inverse in \( \mathrm{End}(\mathbb{A}^n_k) \).

     The \emph{degree} of a polynomial map \( f \in \mathrm{End}(\mathbb{A}^n_k) \) is defined as
     \[\deg(f) := \max \left\{ \deg(f_1), \dots, \deg(f_n) \right\}.\]
     To study the asymptotic behavior of iterations of such maps, people introduce the notion of \emph{dynamical degree}. For \( f \in \mathrm{End}(\mathbb{A}^n_k) \), the (first) dynamical degree is defined as
     \[\lambda(f) := \lim_{r \to \infty} \big( \deg(f^r) \big)^{1/r}.\]
     This limit follows from Fekete’s subadditivity lemma~\cite{fekete1923verteilung} and measures the exponential growth rate of degrees under iteration. It plays a central role in algebraic dynamics, where it serves as a fundamental invariant that characterizes the complexity of a map.

     In low dimensions, dynamical degrees are relatively well understood. For example, it is known that if \( f \in \mathrm{End}(\mathbb{A}^1_k) \) or \( f \in \mathrm{Aut}(\mathbb{A}^2_k) \), then \( \lambda(f) \in \mathbb{Z} \)~\cite{jung1942ganze,van1953polynomial,van2000polynomial}. Moreover, the set of dynamical degrees of quadratic endomorphisms of \( \mathbb{A}^2_{\mathbb{C}} \) is exactly
     \[\left\{ 1, \sqrt{2}, \frac{1+\sqrt{5}}{2}, 2 \right\},\]
     classified in~\cite{maegawa2001classification}. The same set also arises as the set of dynamical degrees of degree-2 automorphisms of \( \mathbb{A}^3_{\mathbb{C}} \)~\cite{guedj2004dynamics}. 

     In higher dimensions and for maps of higher degree, the structure of dynamical degrees becomes significantly more intricate. For instance, the set of dynamical degrees of degree-3 automorphisms of \( \mathbb{A}^3_k \) has been computed as
     \[
     \left\{ 1, \sqrt{2}, \frac{1+\sqrt{5}}{2}, \sqrt{3}, 2, \frac{1+\sqrt{13}}{2}, 1+\sqrt{2}, \sqrt{6}, \frac{1+\sqrt{17}}{2}, \frac{3+\sqrt{5}}{2}, 1+\sqrt{3}, 3 \right\}
     \]
     (cf.~\cite[Theorem 2]{Bla}).

     These values are not all integers, but they are all \emph{algebraic integers}. 
     This leads to a natural and fundamental question in the field purposed by Favre \cite{dang2021spectral}:

     \begin{conjecture}\label{404}
      The dynamical degree of any element in \( \mathrm{End}(\mathbb{A}^n_k) \) is an algebraic integer of degree at most \( n \).
     \end{conjecture}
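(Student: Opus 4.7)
The plan is to exploit the standard dictionary between dynamical degrees and actions on Picard groups of compactifications. First I would seek a smooth projective compactification $X$ of $\mathbb{A}^n_k$ together with a rational extension $\tilde f\colon X\dashrightarrow X$ that is \emph{algebraically stable}, meaning $(\tilde f^r)^*=(\tilde f^*)^r$ on the N\'eron--Severi group $\mathrm{N}^1(X)_{\mathbb{R}}$ for every $r\ge 1$. On such a model the intersection formula $\deg(f^r)=(\tilde f^r)^*H\cdot H^{n-1}$, with $H$ the class of a hyperplane in a fixed affine embedding, together with Fekete's lemma identifies $\lambda(f)$ with the spectral radius of $\tilde f^*$ acting on $\mathrm{N}^1(X)$. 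Since this action preserves the integer lattice $\mathrm{N}^1(X)_{\mathbb{Z}}$, its characteristic polynomial is monic with integer coefficients; every eigenvalue, and hence $\lambda(f)$ itself, is therefore an algebraic integer.

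The crux of the conjecture is the bound on the algebraic degree by $n$. Compactifying by $\mathbb{P}^n$ would give $\dim\mathrm{N}^1=1$ and even force an integer dynamical degree, but $\tilde f$ is almost never algebraically stable on $\mathbb{P}^n$; each blow-up needed to restore stability raises the Picard rank by one, potentially without bound. Instead I would try to produce a model $X$ adapted to the combinatorial shape of $f$, for instance a toric variety whose fan refines the normal fan of the Newton polytope of $f$, arranged so that the pullback $\tilde f^*$ preserves a distinguished subspace $V\subset\mathrm{N}^1(X)_{\mathbb{Q}}$ of dimension at most $n$ on which its spectral radius is attained. The characteristic polynomial of the restriction $\tilde f^*|_V$ would then exhibit $\lambda(f)$ as an algebraic integer of degree $\le n$.

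For the affine-triangular situation treated in this paper, the strategy specialises as follows. Writing $f=a\circ t$ with $a$ affine and $t$ triangular, the triangular factor preserves the coordinate flag $\{x_n=0\}\supset\{x_n=x_{n-1}=0\}\supset\cdots$, which corresponds to a distinguished filtration of boundary divisors in the natural toric compactification. I would describe $\tilde t^*$ as an upper-triangular integer matrix on the $n$-dimensional subspace spanned by these boundary classes plus the hyperplane at infinity, then conjugate by the action of $\tilde a^*$ (which permutes and rescales these classes), and finally read off the characteristic polynomial of the restriction to this invariant subspace.

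The step I expect to be hardest is algebraic stabilization after composing with the affine factor. The triangular part alone leaves the coordinate flag invariant, but a generic affine transformation scrambles these directions, creating new indeterminacies at infinity and possibly forcing an unbounded cascade of blow-ups to restore stability. Controlling this in dimension four already demands a delicate case analysis, which is presumably why the paper restricts to $n=4$, and pushing the argument to arbitrary $n$, let alone to non-triangular endomorphisms, is where the conjecture opens up into genuinely open territory.
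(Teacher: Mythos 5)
The statement you are addressing is stated in the paper as a \emph{conjecture} (attributed to Favre), and the paper itself does not prove it: it only establishes special cases (affine-triangular automorphisms of $\mathbb{A}^4_k$, and quadratic automorphisms when $\mathrm{char}(k)\neq 2$) by an elementary method based on weighted $\mu$-degrees, maximal eigenvectors of matrices contained in $f$, conjugation to permutation-triangular form, and a case analysis over the $24$ permutations, together with reductions via linear projections. Your text is a strategy sketch, not a proof, and the two steps you defer are exactly the content of the conjecture. First, the existence of an algebraically stable model $\tilde f\colon X\dashrightarrow X$ is not available in general: stabilization by blow-ups is known only in restricted settings (e.g.\ birational surface maps), there are maps admitting no algebraically stable model in natural classes (monomial examples), and in higher dimensions no stabilization theorem exists; your argument that $\lambda(f)$ equals the spectral radius of $\tilde f^*$ on $\mathrm{N}^1(X)$ depends entirely on this unproved input. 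Second, even granting stability, the spectral radius is an eigenvalue of an integer matrix of size equal to the Picard rank of $X$, which grows without bound under the very blow-ups used to stabilize; the bound ``degree $\le n$'' therefore requires the invariant subspace $V$ of dimension at most $n$ whose existence you merely posit (``I would try to produce\dots''). You acknowledge this yourself, but acknowledging the gap does not close it.

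Two further points. The conjecture concerns arbitrary elements of $\mathrm{End}(\mathbb{A}^n_k)$, not automorphisms, and for non-invertible or merely rational maps the algebraicity of dynamical degrees is not a soft consequence of lattice-preservation: dynamical degrees of rational selfmaps of projective space can be transcendental, so any correct proof must exploit the specific structure of polynomial endomorphisms of affine space rather than general pullback formalism. Finally, even for the affine-triangular case in dimension four that the paper does handle, your proposed route through toric compactifications and boundary-divisor filtrations is not what the paper does and is not carried out; the paper instead avoids compactifications altogether, working with the maximal eigenvalue $\theta$ of integer matrices contained in $f$ (a $4\times 4$ datum, which is what yields ``degree $\le 4$''), proving $\lambda(f)=\theta$ when a strictly positive maximal eigenvector exists, and otherwise reducing to linear projections or to conjugate forms. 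So the verdict is: genuine gap — the central claims (stabilization, and the $n$-dimensional invariant subspace) are asserted, not proved, and they constitute the open problem itself.
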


     This conjecture holds in many known cases. When \( f \in \mathrm{End}(\mathbb{A}^1_k) \) or \( f \in \mathrm{Aut}(\mathbb{A}^2_k) \), the result is classical. The dynamical degree of any element in \( \mathrm{End}(\mathbb{A}^2_{\mathbb{C}}) \) is a quadratic integer, as shown in~\cite[Theorem A]{favre2007eigenvaluations}.

     In dimension three, a notable family of maps has been studied in detail, the so-called \emph{affine-triangular automorphisms} (see Definition~\ref{013}). Their dynamical degrees can be computed explicitly and turn out to be algebraic integers of degree at most 2.
     
     \begin{theorem}~\cite[Theorem 1]{blanc2022dynamical}
     For each integer $d\ge 2$, the set of dynamical degrees of all affine-triangular automorphisms of $\mathbb{A}^3_k$ of degree $\le d$ is equal to
     \[
	       \left.\left\{\frac{a+\sqrt{a^2+4bc}}{2}\right| (a,b,c)\in \mathbb{N}^3, a+b\le d, c\le d\right\}\setminus \{0\}.
     \]
     Moreover, for all $a,b,c\in\mathbb{N}$ such that $\lambda=\frac{a+\sqrt{a^2+4bc}}{2}\neq 0$, this value indeed occurs as a dynamical degree.
     The dynamical degree $\lambda$ is achieved by one of the automorphisms
     \[
	     (x_3 + x_1^ax_2^b,x_2+x_1^c,x_1) \text{ or }(x_3+x_1^ax_2^{bc},x_1,x_2) \, .
     \] 
     \end{theorem}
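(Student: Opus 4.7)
The plan is to prove both inclusions separately. For the realization direction I would iterate the two named normal forms directly; for the converse, I would classify affine-triangular automorphisms up to an appropriate equivalence and reduce to a small-dimensional linear recursion, where the main obstacle will be ruling out unexpected degree drops.

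\textbf{Step 1 (realization).} For $f = (x_3 + x_1^a x_2^b,\, x_2 + x_1^c,\, x_1)$, set $d_i^{(r)} := \deg((f^r)_i)$ and unwind $f^{r+1} = f \circ f^r$. Assuming no leading-term cancellation, the sequence $(d_1^{(r)}, d_2^{(r)}, d_3^{(r)})$ eventually satisfies the linear recursion
\[
d_1^{(r+1)} = a\, d_1^{(r)} + b\, d_2^{(r)}, \qquad d_2^{(r+1)} = c\, d_1^{(r)}, \qquad d_3^{(r+1)} = d_1^{(r)},
\]
whose associated matrix has characteristic polynomial $t(t^2 - at - bc)$ and hence spectral radius $\tfrac{a + \sqrt{a^2 + 4bc}}{2}$. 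The non-cancellation hypothesis is verified by a direct algebraic-independence check on the dominant monomial in each coordinate of $f^r$ (the two competing terms in $(f^{r+1})_1$ carry strictly different multidegrees). Degenerate cases in which the above recursion collapses (for instance $b=0$, or where $a+b$ fails to be the controlling exponent) are handled by the second family $(x_3 + x_1^a x_2^{bc},\, x_1,\, x_2)$, which yields the same characteristic polynomial by a parallel calculation in which the shift is replaced by a $3$-cycle.

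\textbf{Step 2 (classification).} For the converse I would write a general affine-triangular automorphism as $f = \alpha \circ \tau$ with $\alpha$ affine and $\tau$ de Jonquières, and use conjugation by affine automorphisms to bring $f$ into one of finitely many normal forms indexed by the permutation underlying $\alpha$ together with the support of the triangular part. For each normal form, the degrees $d_i^{(r)}$ satisfy an eventual linear recursion obtained by tracking the leading monomial of each coordinate under a suitable weighted order. A case-by-case analysis should show that the resulting $3\times 3$ transition matrix always splits into a $1\times 1$ eigenblock (corresponding to a fixed or linearly-growing coordinate) and a $2\times 2$ eigenblock of the form $\begin{pmatrix}a & b\\ c & 0\end{pmatrix}$ for nonnegative integers $a,b,c$ constrained by $\deg f\le d$ to satisfy $a+b\le d$ and $c\le d$. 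Its dominant eigenvalue is exactly the displayed value.

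\textbf{Step 3 (main obstacle).} The hard point is the \emph{degree-drop} issue in Step~2: verifying that the dominant-monomial analysis is exact, rather than merely an upper bound. Concretely, one must argue that for each normal form the leading exponent vectors of successive iterates remain in general position after composition with $\alpha$, so that no accidental cancellation lowers the predicted growth rate. The triangular structure of $\tau$ is what guarantees this, and it is also the structural reason the effective transition matrix is confined to a $2$-dimensional block. This last fact is precisely what forces every affine-triangular dynamical degree in dimension three to be a quadratic algebraic integer, and, together with the bookkeeping of the exponent constraints, yields the exact set stated in the theorem.
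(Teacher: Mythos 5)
This statement is not proved in the paper at all: it is quoted verbatim from \cite{blanc2022dynamical}, so there is no internal proof to compare against, and your proposal has to stand on its own as a reconstruction of that cited work. As such it is an outline with the decisive step missing.

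The genuine gap is in what you yourself flag as Step 3, and it is not a technicality. You assert that ``the triangular structure of $\tau$ is what guarantees'' that the dominant-monomial analysis is exact, i.e.\ that no cancellation or degree drop spoils the lower bound $\deg(f^r)\gtrsim\lambda^r$. That assertion is false as stated: affine-triangular automorphisms are in general \emph{not} algebraically stable, degree drops under iteration do occur, and the whole point of the machinery in \cite{blanc2022dynamical} (and of the present paper) is to repair this by working with weighted degrees $\deg_\mu$ and conjugating to a model where $\mu$-algebraic stability holds (Theorem \ref{012}: if the maximal eigenvector $\mu$ is strictly positive, then $\lambda(f)=\theta$). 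For the realization direction (your Step 1) this machinery actually makes your ad hoc ``non-cancellation check'' unnecessary: for the two normal forms one exhibits the matrix contained in $f$, whose relevant block is $\begin{pmatrix} a & b \\ c & 0\end{pmatrix}$, checks when its maximal eigenvector can be chosen strictly positive, and reads off $\lambda=\tfrac{a+\sqrt{a^2+4bc}}{2}$; the degenerate parameter cases ($abc=0$, where positivity fails) are exactly why the second family $(x_3+x_1^ax_2^{bc},x_1,x_2)$ is needed, and your proposal only gestures at this. For the converse direction (your Step 2) the gap is larger: the claims that every affine-conjugacy normal form yields an \emph{eventual exact} linear recursion for $(\deg (f^r)_i)_i$, that the transition matrix always splits off a $2\times 2$ block of the stated shape, and that the parameters are forced to satisfy $a+b\le d$ and $c\le d$, are precisely the content of the cited theorem and are nowhere derived. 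Without a mechanism that either establishes $\mu$-algebraic stability after conjugation or otherwise certifies the lower bound on degree growth case by case, Steps 2--3 only give an upper bound on $\lambda(f)$, not the exact set in the statement.
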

     
     More generally, if char$(k)=0$, it has been proved that every element in \( \mathrm{End}(\mathbb{A}^3_k) \) has a dynamical degree which is an algebraic integer of degree at most 6.
     
     \begin{theorem}~\cite[Corollary 3]{dang2021spectral}
         Dynamical degrees of polynomial automorphisms of $\mathbb{A}_k^3$ are algebraic numbers whose degree over $\mathbb{Q}$ is at most $6$.
     \end{theorem}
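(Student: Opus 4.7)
The plan is to realise the dynamical degree of $f \in \mathrm{Aut}(\mathbb{A}^3_k)$ as the spectral radius of an integral linear endomorphism of a $\mathbb{Q}$-vector space of dimension at most $6$. To begin, I would extend $f$ to a birational self-map $\bar{f} : X_0 \dashrightarrow X_0$ of a smooth projective compactification $X_0$ of $\mathbb{A}^3_k$, for concreteness $X_0 = \mathbb{P}^3_k$, in which the boundary is the hyperplane at infinity. The dynamical degree $\lambda(f)$ equals the first dynamical degree of $\bar{f}$, readable off the asymptotic growth of intersection numbers $(\bar{f}^r)^{\ast} H \cdot H^{2}$ for an ample class $H$ on $X_0$.

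The main step is to repair the failure of $\bar{f}^{\ast}$ to commute with iteration on $\mathrm{Pic}(X_0)$. I would pass to the inverse limit of all smooth models dominating $X_0$, on which $\bar{f}$ acts naturally via pullback of Cartier b-divisors, and inside this limit exhibit a finite-dimensional $\mathbb{Q}$-subspace $V$ that is $\bar{f}^{\ast}$-invariant, algebraically stable, and contains a class whose iterated pullback computes $\lambda(f)$. In dimension two this is the Diller--Favre regularisation, carried out on a single finite model. In dimension three there need not exist any finite model on which $\bar{f}$ becomes algebraically stable; instead one builds $V$ directly from the combinatorics of divisorial valuations centred on the boundary $\mathbb{P}^2 \subset \mathbb{P}^3_k$, together with the images of a distinguished ample class under iterated pullback. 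The content of the argument is that, for polynomial automorphisms of $\mathbb{A}^3_k$ (in characteristic zero), this lattice can be chosen to have rank at most $6$.

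Conclusion is then linear algebra: choosing an integral basis of $V$ compatible with the Weil divisor class structure, $\bar{f}^{\ast}|_V$ is represented by a matrix in $M_d(\mathbb{Z})$ with $d \le 6$, whose characteristic polynomial lies in $\mathbb{Z}[t]$. Its spectral radius $\lambda(f)$ is therefore an algebraic integer, in particular an algebraic number of degree at most $6$ over $\mathbb{Q}$. The main obstacle, unsurprisingly, is the construction of $V$ with the correct rank bound: iterated blow-ups above the indeterminacy locus of $\bar{f}$ produce Picard spaces of unbounded rank, so one must argue that the dynamically relevant part collapses onto a bounded-rank $\bar{f}^{\ast}$-invariant quotient. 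Isolating the number $6$ should reflect explicit structural features of $\mathrm{Aut}(\mathbb{A}^3_k)$—constraints on the shape of the indeterminacy loci of $\bar{f}$ and $\bar{f}^{-1}$ on the hyperplane at infinity, together with the fact that their contracted hypersurfaces live in the single component $\mathbb{P}^2$—and making this bound effective is the central technical work, carried out in \cite{dang2021spectral}.
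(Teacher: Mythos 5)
This statement is not proved in the paper at all: it is quoted verbatim from \cite[Corollary 3]{dang2021spectral}, so there is no internal argument to compare yours against; the only meaningful comparison is with the Dang--Favre proof itself. Your sketch does identify the right general framework (compactify in $\mathbb{P}^3_k$, pass to the inverse limit of all smooth models, and try to read $\lambda(f)$ off a finite-dimensional invariant piece of the pullback action), which is indeed the spirit of the spectral-interpretation machinery of that paper. But as a proof it has a genuine gap, and you essentially concede it: the entire content of the theorem is the existence of a bounded-rank $\bar f^{\ast}$-invariant subspace (or, in Dang--Favre's actual formulation, the fact that $\lambda_1$ appears as an eigenvalue of the action on suitable spaces of b-divisor classes, combined with the relation between $f$ and $f^{-1}$ that pins the degree bound to $6$ for automorphisms of $\mathbb{A}^3$ in characteristic zero). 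You assert that such a $V$ of rank at most $6$ exists and then defer its construction to the very reference being proved, so the argument is circular at its only nontrivial step; nothing in your sketch explains where $6$ comes from, nor why the unbounded Picard ranks produced by iterated blow-ups collapse onto a bounded invariant piece.

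There is also an overclaim in your conclusion. You choose an ``integral basis'' of $V$, represent $\bar f^{\ast}|_V$ by a matrix in $M_d(\mathbb{Z})$, and conclude that $\lambda(f)$ is an algebraic \emph{integer}. The invariant subspaces arising in the Dang--Favre construction are real (or $\mathbb{Q}$-) subspaces of spaces of Weil/Cartier classes and are not known to carry an invariant lattice; accordingly their theorem asserts only that the dynamical degrees are algebraic \emph{numbers} of degree at most $6$ over $\mathbb{Q}$. Integrality is precisely the open part of Conjecture \ref{404} discussed in this paper, so deducing it from an unspecified choice of basis is not justified and signals that the decisive structure of $V$ has not actually been pinned down in your argument.
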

     
     Despite these results, very little is known in dimensions \( n \geq 4 \). Understanding the algebraic and arithmetic nature of dynamical degrees in higher dimensions remains a deep and largely open problem, and serves as the main motivation of the present work.
     \subsection{Main result}
     
     In this paper, we investigate the dynamical degrees of \( \mathrm{Aut}(\mathbb{A}^4_k) \), focusing on affine triangular automorphisms. While such maps have been well studied in dimension three—most notably in \cite{Bla}—they remain largely unexplored in dimension four. Our aim is to extend the study of this family of automorphisms to dimension four and to develop methods for determining their dynamical degrees explicitly. A key technique in the approach is the notion of \(\mu\)-algebraically stable (see Definition \ref{102}), a property introduced and utilized in the three-dimensional case to facilitate dynamical degree computations. Roughly speaking, a map is said to be \(\mu\)-algebraically stable if the sequence of degrees of its iterates exhibits multiplicative stability, and the degree of \( f^n \) grows exactly like \( \lambda(f)^n \), where \( \lambda(f) \) is the dynamical degree. When this property holds, the computation of \( \lambda(f) \) becomes possible.

     But it often happens that a polynomial automorphism is not \(\mu\)-algebraically stable, so in this paper, we adapt and generalize J.Blanc and I van santen’s framework to the four-dimensional setting. Since the dynamical degree is invariant under conjugation (Lemma \ref{008})—that is, for any two conjugate maps \( f \) and \( g \) in \( \mathrm{End}(\mathbb{A}^n_k) \), we have \( \lambda(f) = \lambda(g) \)—this invariance allows us to simplify the problem by reducing the study of a general affine-triangular automorphism in dimension four to a conjugate form that is either \(\mu\)-algebraically stable, or whose dynamical degree can be computed explicitly by direct methods. See Section \ref{103} for more details. 
     
    Our main result is Theorem \ref{016}.
	
	\begin{theorem}\label{016}
		Let $f\in \mathrm{Aut}(\mathbb{A}_k^4)$ be affine-triangular. Then $\lambda(f)$ is an algebraic integer of degree $\le 4$.
	\end{theorem}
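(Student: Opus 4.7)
The plan is to follow the strategy developed for dimension three in \cite{blanc2022dynamical} and adapt it to dimension four, using the conjugation-invariance of the dynamical degree (Lemma \ref{008}) together with the notion of $\mu$-algebraic stability (Definition \ref{102}). Write $f = a \circ t$ where $a \in \mathrm{Aff}(\mathbb{A}_k^4)$ and $t = (t_1, t_1 + p_2(x_1), x_3 + p_3(x_1, x_2), x_4 + p_4(x_1, x_2, x_3))$ is triangular (after absorbing constants); more precisely $t_i$ depends only on $x_1, \ldots, x_{i-1}$ up to a linear term in $x_i$. The first task is to exploit the freedom to conjugate by affine automorphisms in order to reduce $f$ to finitely many normal forms, classified by the "shape" of the exponent vectors of the monomials appearing in $p_2, p_3, p_4$ together with the combinatorial type of $a$ (which variables it permutes, which it fixes, etc.).

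For each normal form the next step is to attempt to verify $\mu$-algebraic stability. When successful, the degree sequence $(\deg f^r)_{r \ge 0}$ is controlled by a linear action on a finitely generated $\mathbb{Z}$-module of rank at most $4$ (the relevant divisor classes on a suitable compactification), encoded in an integer matrix $M_f$ of size at most $4 \times 4$. Then $\lambda(f)$ is the spectral radius of $M_f$, and is therefore a root of $\det(\lambda I - M_f) \in \mathbb{Z}[\lambda]$, an algebraic integer of degree $\le 4$. The routine work is to identify, in each normal form, the right compactification and to check that no pullback contracts an exceptional divisor onto the indeterminacy locus of a future iterate, which is the content of $\mu$-algebraic stability.

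The main obstacle is that, just as in dimension three, a generic affine-triangular $f$ will \emph{not} be $\mu$-algebraically stable in its initial form. For such $f$ one must either exhibit a further conjugation that restores stability, or, if that fails, directly compute the recursion satisfied by $\deg(f^r)$. In dimension four the exponent lattice $\mathbb{N}^3$ in which $p_4$ lives admits many more combinatorial types than the $\mathbb{N}^2$ case of \cite{blanc2022dynamical}, so the case list is long and organizing it coherently—so that each non-stable case is matched with an explicit conjugate or an explicit recursion of order $\le 4$—is the bulk of the technical work. This is presumably what Section \ref{103} is devoted to.

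Once every normal form has been treated, the proof concludes by observing that in every case the dynamical degree is the dominant root of an integer polynomial of degree at most $4$: either the characteristic polynomial of a $4\times 4$ matrix in the $\mu$-algebraically stable cases, or the characteristic polynomial of the linear recurrence satisfied by $\deg(f^r)$ in the remaining cases. Since conjugation preserves $\lambda(f)$, the bound transfers back to the original $f$, completing the proof of Theorem \ref{016}.
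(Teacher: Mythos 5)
Your proposal correctly identifies the overall strategy (conjugate to normal forms, use conjugation-invariance of $\lambda$, and invoke $\mu$-algebraic stability so that $\lambda(f)$ becomes the spectral radius of a $4\times 4$ integer matrix), but as written it is a plan rather than a proof: the entire substance of the argument is deferred. The paper's proof begins by conjugating $f$ to a \emph{permutation-triangular} automorphism $\omega\circ s$ (a cited result, Proposition 4.1.1 of \cite{blanc2022dynamical}), which reduces the classification to the $24$ permutations of $S_4$ rather than to an unspecified list of ``shapes of exponent vectors'' of $p_2,p_3,p_4$; it then uses an explicit elimination-by-conjugation lemma (Proposition \ref{031}) to kill certain $p_i$, and for each remaining type analyzes the maximal eigenvector $\mu$ of the maximal eigenvalue $\theta$. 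None of this case analysis appears in your proposal, and you explicitly acknowledge that organizing it is ``the bulk of the technical work''; without it the theorem is not proved.

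The more serious issue is your proposed fallback for the non-stable cases. You claim that whenever stability cannot be restored by conjugation one can ``directly compute the recursion satisfied by $\deg(f^r)$'' and that this recursion has order $\le 4$, so that $\lambda(f)$ is again a root of an integer polynomial of degree $\le 4$. This is asserted, not proved, and it is not how the gap is actually closed: the paper's key observation is that if the maximal eigenvector $\mu$ has some zero coordinates, then either $\theta=1$ (so $\lambda(f)=1$), or the vanishing coordinates force the corresponding $p_i$ to be independent of certain variables, so that $f$ admits an $m$-linear projection; one then concludes via Lemma \ref{001}, Proposition \ref{011} and Lemma \ref{023}, which reduce to the dimension-$3$ result of \cite{blanc2022dynamical} (degree $\le 2$) or give an integer dynamical degree, with a few residual types handled by explicit conjugations onto already-treated permutation types. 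Your geometric reformulation of $\mu$-algebraic stability (no exceptional divisor contracted onto indeterminacy of a future iterate on ``the right compactification'') is also not the paper's definition, which is combinatorial ($\lambda(f)=\theta$, guaranteed by Theorem \ref{012} when $\mu\in(\mathbb{R}_{>0})^4$); if you want to use the geometric criterion you would have to justify the equivalence, which is additional unproved work. In short, the skeleton matches the paper, but the two load-bearing steps --- the finite normal-form list via permutation-triangular reduction, and the treatment of the non-stable cases via linear projections rather than unproven degree recursions --- are missing or unjustified.
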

	
	For $n\le 4$, if $\operatorname{char}(k)\neq 2$, every quadratic automorphism is conjugated by an affine automorphism to an affine-triangular 
	one, see \cite{meisters1991strong}. As a consequence of Theorem \ref{016}, we have
    
	\begin{corollary}\label{029}
		If $\operatorname{char}(k)\neq 2$, then dynamical degrees of quadratic automorphisms in dimension four are algebraic integers of degree $\le 4$.
	\end{corollary}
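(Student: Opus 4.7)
The plan is to derive this from Theorem \ref{016} in three short steps, using the two auxiliary ingredients already highlighted in the introduction: the Meisters--Olech normal form for quadratic automorphisms in low dimensions \cite{meisters1991strong}, and the conjugation-invariance of the dynamical degree (Lemma \ref{008}).

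First I would fix an arbitrary quadratic $f\in\mathrm{Aut}(\mathbb{A}_k^4)$. Because $\operatorname{char}(k)\neq 2$ and the dimension $n=4$ lies within the range $n\le 4$ covered by \cite{meisters1991strong}, that result produces an affine automorphism $\varphi\in\mathrm{Aut}(\mathbb{A}_k^4)$ such that $g:=\varphi\circ f\circ\varphi^{-1}$ is affine-triangular. Next, Lemma \ref{008} gives $\lambda(f)=\lambda(g)$. Finally, since $g$ is an affine-triangular element of $\mathrm{Aut}(\mathbb{A}_k^4)$, Theorem \ref{016} applies to $g$ and yields that $\lambda(g)$ is an algebraic integer of degree at most $4$; the same is therefore true of $\lambda(f)$.

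The main obstacle in this corollary is not really a technical one: all of the dynamical work has already been absorbed into Theorem \ref{016}, and the normal form result and the conjugacy invariance are each invoked as black boxes. The only subtlety worth flagging is the role of the two hypotheses. The dimension bound $n\le 4$ is essential because the affine classification of quadratic automorphisms fails in higher dimensions, and the exclusion of characteristic $2$ is exactly what enables the reduction to the affine-triangular form in \cite{meisters1991strong}; neither hypothesis can be relaxed without developing a different argument. Once these two facts are in hand, no further case analysis or algebraic-stability argument is required, and the statement follows directly.
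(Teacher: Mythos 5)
Your argument is exactly the one the paper intends: conjugate the quadratic automorphism to an affine-triangular one via the result of \cite{meisters1991strong} (valid for $n\le 4$ and $\operatorname{char}(k)\neq 2$), invoke the conjugation-invariance of the dynamical degree from Lemma \ref{008}, and apply Theorem \ref{016}. This matches the paper's deduction, which states the corollary as an immediate consequence of these same three ingredients.
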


    We give the proof of Theorem \ref{016} in Section 3.
    Besides, we continue exploring the dynamical degrees of some specific polynomial automorphisms in Section 4. 
    Which the dynamical degrees are affirmative answers to Conjecture \ref{404}.
	
	\section{Preliminaries}
	In this section, we recall some basic definitions concerning polynomial automorphisms and recall the $\mu$-algebraically stable theory proposed by Blanc and
    van Santen \cite{blanc2022dynamical}.
    \subsection{$\mu$-algebraically stable}
	\begin{definition}\label{013} An automorphism $f=(f_1,\cdots ,f_n)\in \mathrm{Aut}(\mathbb{A}_k^n)$ is: 
		\begin{itemize}[leftmargin=*]
			\item triangular if $f_i=c_ix_i+p_i$, where $c_i\in k\setminus \left \{ 0 \right \} $ and $p_i\in k[x_1,\cdots ,x_{i-1}]$;
			\item elementary if $f_i=x_i$ for $i=1,\cdots,n-1$, and $f_n=c_nx_n+p_n$, where $c_n\in k\setminus \left \{ 0 \right \} $ and $p_n\in k\left [ x_1,\cdots,x_{n-1} \right ] $;
			\item affine if $\deg(f)=1$;
			\item permutation if $\{ f_1,\cdots,f_n \}=\{ x_1,\cdots,x_n \}$;
            \item affine-triangular if $f=\alpha\circ\tau$, where $\alpha$ is affine and $\tau$ is triangular;
            \item affine-elementary if $f=\alpha\circ\tau$, where $\alpha$ is affine and $\tau$ is elementary;
            \item permutation-triangular if $f=\alpha\circ\tau$, where $\alpha$ is permutation and $\tau$ is triangular;
            \item permutation-elementary if $f=\alpha\circ\tau$, where $\alpha$ is permutation and $\tau$ is elementary.
		\end{itemize}
	\end{definition}
	
	The dynamical degree is an invariant under conjugation.
	
	\begin{lemma}\label{008}
		Let $f,g \in \mathrm{Aut}(\mathbb{A}_k^n)$ and $h=g\circ f\circ g^{-1} \in \mathrm{Aut}(\mathbb{A}_k^n)$. Then $\lambda(h)=\lambda(f)$.
	\end{lemma}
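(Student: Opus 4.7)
The plan is to exploit the elementary fact that conjugation commutes with iteration, together with the submultiplicativity of the degree under composition, to sandwich $\lambda(h)$ between quantities that converge to $\lambda(f)$. The key identity is that for every $r \ge 1$,
\[
h^r = (g\circ f\circ g^{-1})^r = g\circ f^r\circ g^{-1},
\]
so the iterates of the conjugate differ from the iterates of $f$ only by composition with the fixed automorphisms $g$ and $g^{-1}$.

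First I would record the standard submultiplicativity inequality $\deg(\varphi\circ\psi)\le \deg(\varphi)\cdot\deg(\psi)$ for $\varphi,\psi\in\mathrm{End}(\mathbb{A}_k^n)$, which is immediate from the definition of the degree of a polynomial map. Applying this twice to $h^r = g\circ f^r\circ g^{-1}$ yields
\[
\deg(h^r) \le \deg(g)\cdot \deg(f^r)\cdot \deg(g^{-1}) = C\cdot \deg(f^r),
\]
where $C := \deg(g)\cdot \deg(g^{-1})$ is a positive constant independent of $r$. Taking $r$-th roots and letting $r\to\infty$, the factor $C^{1/r}$ tends to $1$, and the definition of dynamical degree gives $\lambda(h)\le \lambda(f)$.

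For the reverse inequality, I would simply apply the same argument to the conjugation $f = g^{-1}\circ h\circ g$, obtaining $\lambda(f)\le \lambda(h)$ by the identical estimate with the roles of $(g,g^{-1})$ swapped. Combining the two inequalities gives the claimed equality $\lambda(f)=\lambda(h)$.

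There is no real obstacle here: the argument is routine and uses only the definition of $\lambda$ and the fact that $\deg$ is submultiplicative under composition; the only mild point worth stating clearly is that the existence of the limit defining $\lambda(f)$ (via Fekete's lemma, as already noted in the introduction) is what ensures that the two passes to the limit in $r$ give the same value as $\limsup$ or $\liminf$, so the sandwich argument is legitimate.
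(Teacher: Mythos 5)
Your argument is correct and is essentially the paper's own proof: both use $h^r=g\circ f^r\circ g^{-1}$, the submultiplicativity of $\deg$ to get $\deg(h^r)\le \deg(g)\deg(f^r)\deg(g^{-1})$, and the symmetric conjugation $f=g^{-1}\circ h\circ g$ for the reverse inequality. Your additional remarks on the constant $C^{1/r}\to 1$ and Fekete's lemma just make explicit what the paper leaves implicit.
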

	\begin{proof}
		Note that $h^r=g\circ f^r \circ g^{-1}$. 
        Then $\lambda(h)\le \lambda(f)$, since $\deg(h^r)\le \deg(g)\cdot\deg(f^r)\cdot\deg(g^{-1})$.  
        Similarly, $\lambda(f)\le \lambda(h)$ since $f^r=g^{-1}\circ h^r \circ g$.
	\end{proof}
	
	\begin{definition}
		For each $\mu =(\mu _1,\cdots,\mu _n)\in (\mathbb{R}_{\ge  0})^n\setminus \left \{ 0 \right \} $,
		the degree function is defined as
		$$\deg_\mu: k[x_1,\cdots,x_n]\setminus \left \{ 0 \right \}  \to \mathbb{R}_{\ge 0},$$
		$$\sum_{(a_1,\cdots,a_n)\in \mathbb{N}^n\setminus \left \{ 0 \right \}}c_{(a_1,\cdots,a_n)}x_1^{a_1}x_2^{a_2}\cdots x_n^{a_n}\mapsto \max\left \{ \sum_{i=1}^{n}a_i\mu _i\mid  c_{(a_1,\cdots,a_n)}\ne 0 \right \}.  $$
	\end{definition}
	\begin{definition}
		Let $\mu =(\mu _1,\cdots,\mu _n)\in (\mathbb{R}_{\ge  0})^n\setminus \left \{ 0 \right \}$.
		For each $f=(f_1,\cdots ,f_n)\in\mathrm{Aut} (\mathbb{A}_k^n )\setminus \left \{ 0 \right \}$,
		the $\mu$-degree of $f$ is defined by
		$$\deg_\mu (f)=\min \left \{ \theta \in \mathbb{R}_{\ge 0} \mid \deg_\mu(f_i)  \le \theta \mu _i \ \text {for each }\ i\in \left \{ 1,\cdots,n \right \}  \right \}, $$
		and $\deg_\mu(f)=\infty $ if the set is empty.
	\end{definition}
	\begin{definition}
		Let $f=(f_1,\cdots,f_n)\in \mathrm{Aut}(\mathbb{A}_k^n)$, 
		the square matrix $M=(m_{i,j})_{i,j=1}^{n}\in \mathrm{Mat}_n(\mathbb{N} ) $
		is said to be contained in $f$ if the coefficient of the monomial $\prod_{j=1}^{n}x_j^{m_{i,j}} $
		in $f_i$ is not zero.
		The set of matrices contained in $f$ is finite.
	\end{definition}
	\begin{definition}
		Let $f=(f_1,\cdots,f_n)\in \mathrm{Aut}(\mathbb{A}_k^n)$.
		\begin{enumerate}[leftmargin=*]
			\item 
			The maximal eigenvalue of $f$ is defined as
			$$\theta =\max\left \{ \left | \xi  \right | \in \mathbb{R} \mid \xi 
			\text{ is an eigenvalue of a matrix cotained in} \ f \right \} .$$ 
			\item 
			The maximal eigenvector of $f$ is defined as 
			$$\mu =\left \{(\mu_1,\cdots,\mu_n)\in \mathbb{R}_{\ge 0}^n 
            \mid \deg_\mu(f_i)=\theta \mu_i \ \text{for each }\ i\in \left \{ 1,\cdots,n \right \} \right \}.$$
		\end{enumerate}
	\end{definition}
	In particular, for a maximal eigenvector $\mu$, we get $\deg_\mu (f)=\theta < \infty $.
	
	\begin{theorem}\label{012}\cite[Proposition B]{blanc2022dynamical}
		Let $f=(f_1,\cdots,f_n)\in \mathrm{Aut}(\mathbb{A}_k^n)$ with maximal eigenvalue $\theta$.
		\begin{enumerate}[leftmargin=*]
			\item There is a maximal eigenvector 
			$\mu =(\mu_1,\cdots,\mu_n)\in (\mathbb{R}_{\ge 0})^n\setminus \left \{ 0 \right \}  $ of $f$.
			\item $1\le \lambda(f) \le \theta \le \deg(f) $. 
			If $\mu \in (\mathbb{R}_{> 0})^n $, then $\lambda(f)=\theta$.
		\end{enumerate}
	\end{theorem}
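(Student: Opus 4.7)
The plan is to establish the two parts separately, using Perron--Frobenius theory for the existence of the maximal eigenvector and submultiplicativity of $\mu$-degrees under composition for the inequalities.

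For part (1), I would start from the observation that the set $\mathcal{S}$ of matrices contained in $f$ is finite and consists of nonnegative integer matrices. By Perron--Frobenius each $M\in\mathcal{S}$ has its spectral radius $\rho(M)$ as an eigenvalue with a nonnegative eigenvector, so $\theta=\max_{M\in\mathcal{S}}\rho(M)$ is attained by some $M^\ast\in\mathcal{S}$, and there is $\mu\in\mathbb{R}_{\ge 0}^n\setminus\{0\}$ with $M^\ast\mu=\theta\mu$. I would then verify that this $\mu$ is a maximal eigenvector of $f$. The inequality $\deg_\mu(f_i)\ge\theta\mu_i$ is automatic, since the $i$th row of $M^\ast$ is the exponent vector of some monomial of $f_i$. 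For the reverse inequality I argue by contradiction: if some monomial of $f_i$ has exponent vector $a=(a_1,\dots,a_n)$ with $\sum_j a_j\mu_j>\theta\mu_i$, replacing the $i$th row of $M^\ast$ by $a$ produces a matrix $M'\in\mathcal{S}$ with $M'\mu\ge\theta\mu$ componentwise and strict inequality in coordinate $i$; a Collatz--Wielandt comparison then forces $\rho(M')>\theta$, contradicting the maximality of $\theta$. A careful preliminary choice of $M^\ast$ (among matrices of spectral radius $\theta$, pick one whose nonnegative eigenvector has maximal support) is needed to make the strict comparison work when $\mu_i=0$.

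For part (2), the bound $\theta\le\deg(f)$ comes from the classical fact that the spectral radius of a nonnegative matrix is dominated by its largest row sum; each row sum of any $M\in\mathcal{S}$ is the total degree of some monomial of some $f_i$, hence at most $\deg(f)$. The bound $\lambda(f)\ge 1$ is immediate since $\deg(f^r)\ge 1$. For $\lambda(f)\le\theta$ the key tool is submultiplicativity of $\mu$-degree under composition: for any weight $\mu$ and any $g,h\in\mathrm{End}(\mathbb{A}_k^n)$ one has $\deg_\mu((g\circ h)_i)\le \deg_\nu(g_i)$ with $\nu_j:=\deg_\mu(h_j)$. Applying this with $g=h=f$ and using $\deg_\mu(f_j)=\theta\mu_j$ together with the homogeneity $\deg_{c\mu}(g_i)=c\cdot\deg_\mu(g_i)$ gives $\deg_\mu(f^2_i)\le\theta^2\mu_i$; iterating yields $\deg_\mu(f^r_i)\le\theta^r\mu_i$ for every $r$. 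When all $\mu_i>0$, the two gauges $\deg_\mu$ and $\deg$ are equivalent up to multiplicative constants depending only on $\mu$, so $\deg(f^r)\le C\theta^r$ and therefore $\lambda(f)\le\theta$. A matching lower bound is produced by exhibiting a matrix contained in $f^r$ whose spectral growth realises $\theta^r$, giving $\lambda(f)=\theta$ under the positivity hypothesis.

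The main obstacle is part (1): ensuring a single $\mu$ simultaneously satisfies $\deg_\mu(f_i)=\theta\mu_i$ for \emph{every} $i$. The Collatz--Wielandt step is delicate precisely when $\mu_i=0$, because the strict increase $\rho(M')>\rho(M^\ast)$ then requires the Perron eigenvector of $M'$ to have support properly containing that of $\mu$, which is guaranteed only by the preliminary support-maximality selection of $M^\ast$ (or by an irreducibility argument on the block indexed by $\{j:\mu_j>0\}$). Once part (1) is secured, part (2) amounts to routine bookkeeping with the submultiplicativity inequality.
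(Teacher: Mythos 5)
First, a point of reference: the paper does not prove this statement at all — it is quoted from \cite[Proposition B]{blanc2022dynamical} and used as a black box — so your proposal can only be judged on its own merits, not against an argument in the present text.

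Your part (1) has a genuine gap at the decisive step. The claim that $M'\mu\ge\theta\mu$ with strict inequality in one coordinate forces $\rho(M')>\theta$ is false for reducible nonnegative matrices, and reducibility is the typical situation here, not a boundary nuisance confined to $\mu_i=0$. Take the triangular automorphism $f=(x_1,\;x_2+x_1^2,\;x_3+x_1x_2)$: every matrix contained in $f$ has spectral radius $1$, so $\theta=1$, and the identity matrix is contained in $f$; its eigencone is all of $\mathbb{R}_{\ge0}^3$, so your selection rule may legitimately return $M^\ast=I$ with the full-support eigenvector $\mu=(1,1,1)$. The monomial $x_1^2$ of $f_2$ gives $a=(2,0,0)$ with $\langle a,\mu\rangle=2>\theta\mu_2=1$, yet the swapped matrix $M'=\left(\begin{smallmatrix}1&0&0\\2&0&0\\0&0&1\end{smallmatrix}\right)$ has $\rho(M')=1=\theta$: no contradiction arises, even though all $\mu_i>0$, and indeed $(1,1,1)$ is \emph{not} a maximal eigenvector of $f$ (one needs $2\mu_1\le\mu_2$ and $\mu_1+\mu_2\le\mu_3$; e.g.\ $(1,2,3)$ works). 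So your construction neither yields the contradiction nor certifies that the vector it produces is a maximal eigenvector; when $M^\ast$ is reducible the eigencone contains many rays and the "maximal support" rule does not pick the right one. Repairing this requires a genuinely different mechanism — a choice of matrix \emph{and} eigenvector adapted to all rows simultaneously, e.g.\ an induction on the class structure of the nonnegative matrices or a nonlinear Perron--Frobenius/policy-type argument for the operator $\mu\mapsto\max_M M\mu$ — which is the actual content of the cited Proposition B.

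Part (2) is also incomplete where it matters. Your bound $\deg(f^r)\le C\theta^r$ uses the equivalence of $\deg_\mu$ with $\deg$, which is only available when every $\mu_i>0$, whereas the theorem asserts $\lambda(f)\le\theta$ unconditionally; if the maximal eigenvector has zero entries, $\deg_\mu$ gives no control on the coordinates of weight zero and your argument does not bound $\lambda(f)$. (The submultiplicativity estimate $\deg_\mu(f^r_i)\le\theta^r\mu_i$ itself is fine, as are $\theta\le\deg(f)$ via row sums and $\lambda(f)\ge1$.) Finally, the "matching lower bound" realising $\theta^r$ inside $f^r$ when $\mu\in(\mathbb{R}_{>0})^n$ is exactly the hard half of the equality $\lambda(f)=\theta$ — one must rule out cancellation of $\mu$-leading terms under iteration — and in your write-up it is asserted rather than proved.
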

    \begin{definition}\label{102}
        If $\lambda(f)= \theta$, $f$ is said to be $\mu$-algebraically stable.
    \end{definition}
	
	\subsection{Preservation of a linear projection}
	\begin{definition}
		Let $f=(f_1,\cdots,f_n)\in \mathrm{Aut}(\mathbb{A}_k^n)$ and $ S=\left \{ i_1,\cdots,i_m \right \}$ 
		be a subset of $\left \{ 1,\cdots,n \right \} $
		such that $(f_{i_1},\cdots,f_{i_m})\in \mathrm{Aut}(\mathbb{A}_k^m) $, where each $f_{i_j}\in k[x_{i_1},\cdots,x_{i_m}]$. 
        Then we say $f$ has an $m$-linear projection $S$.
	\end{definition}

    \begin{remark} If $f=(f_1,\cdots,f_n)\in \mathrm{Aut}(\mathbb{A}_k^n)$ has an $m$-linear projection $S=\left \{ i_1,\cdots i_m \right \} $, 
    we may always assume that $S=\left \{ 1,\cdots ,m \right \} $ by relabeling variants. If $f=(f_1,\cdots,f_n)\in \mathrm{Aut}(\mathbb{A}_k^n)$ has linear projection $\left \{ 1,\cdots,m \right \} $, let $K=k(x_1,\cdots,x_m)$ be the fraction field of $k[x_1,\cdots,x_m]$. 
    Then we have $(f_{m+1},\cdots,f_n)\in \mathrm{Aut}(\mathbb{A}_K^{n-m})$ since $K[f_{m+1},\cdots,f_n]=K[x_{m+1},\cdots,x_n]$. 

	\end{remark}

    \begin{lemma}\label{001}\cite[Lemma 2.4.1]{blanc2022dynamical}
		Let $f=(f_1,\cdots,f_n)\in \mathrm{Aut}(\mathbb{A}_k^n)$ and suppose it has an $m$-linear projection $\left \{ 1,\cdots,m \right \}$.
		Let $\lambda _1=\lambda(f_1,\cdots,f_m)$, and let  $\lambda _2=\lim_{r \to \infty}\sqrt[r]{\deg_\mu(f^r)}$ where
		$\mu_1=\cdots=\mu_m=0$ and $\mu_{m+1}=\cdots=\mu_n=1$,
		then $\lambda (f)=\max\left \{ \lambda_1,\lambda_2 \right \}  $.
	\end{lemma}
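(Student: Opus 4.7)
The plan is to prove the two inequalities $\lambda(f) \ge \max\{\lambda_1,\lambda_2\}$ and $\lambda(f) \le \max\{\lambda_1,\lambda_2\}$ separately. The first is immediate. Because $f$ preserves the projection onto the first $m$ coordinates, an easy induction shows $(f^r)_i = ((f_1,\ldots,f_m)^r)_i$ for $i \le m$, so $\deg((f_1,\ldots,f_m)^r) \le \deg(f^r)$ and $\lambda_1 \le \lambda(f)$. For $\lambda_2$, every component of $\mu$ is at most $1$, so $\deg_\mu(p) \le \deg(p)$ for every nonzero polynomial $p$; applied componentwise this gives $\deg_\mu(f^r) \le \deg(f^r)$, whence $\lambda_2 \le \lambda(f)$.

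The reverse inequality is the crux. My strategy is to establish the mixed submultiplicative bound
\[
\deg(f^{r+s}) \;\le\; \deg(f^r)\,\deg((f_1,\ldots,f_m)^s) + \deg_\mu(f^r)\,\deg(f^s) \qquad (r,s \ge 1).
\]
To prove it, set $g = (f_1,\ldots,f_m)$ and $H_r = ((f^r)_{m+1},\ldots,(f^r)_n)$, so that $f^r = (g^r, H_r)$. For each $i > m$, expand the $i$-th component of $f^r$ as a polynomial in the vertical variables:
\[
(f^r)_i = \sum_\delta c^{(r)}_{i,\delta}(x_1,\ldots,x_m)\, x_{m+1}^{\delta_1}\cdots x_n^{\delta_{n-m}}.
\]
Then $|\delta| \le \deg_\mu((f^r)_i) \le \deg_\mu(f^r)$ and, crucially, $\deg c^{(r)}_{i,\delta} + |\delta| \le \deg((f^r)_i) \le \deg(f^r)$ by definition. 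Substituting $f^s$ for the variables in $(f^r)_i$ gives $(f^{r+s})_i = \sum_\delta c^{(r)}_{i,\delta}(g^s)\, H_s^\delta$, and each term has total degree at most $(\deg(f^r) - |\delta|)\deg(g^s) + |\delta|\deg(f^s)$. This expression is affine in $|\delta|$, so its maximum over $0 \le |\delta| \le \deg_\mu(f^r)$ is attained at an endpoint; a direct check shows the maximum is bounded by $\deg(f^r)\deg(g^s) + \deg_\mu(f^r)\deg(f^s)$. The case $i \le m$ is handled by $\deg((f^{r+s})_i) \le \deg(g^{r+s}) \le \deg(f^r)\deg(g^s)$, which is already subsumed.

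From the key inequality the upper bound follows by a short asymptotic argument. Setting $r = s$ gives $\deg(f^{2r}) \le 2\deg(f^r)\max\{\deg(g^r), \deg_\mu(f^r)\}$; taking logarithms, dividing by $2r$, and letting $r \to \infty$ (using that the three normalized log-sequences converge to $\log\lambda(f)$, $\log\lambda_1$, $\log\lambda_2$) yields
\[
\log\lambda(f) \;\le\; \tfrac{1}{2}\log\lambda(f) + \tfrac{1}{2}\log\max\{\lambda_1,\lambda_2\},
\]
from which $\lambda(f) \le \max\{\lambda_1,\lambda_2\}$. The main obstacle is the proof of the key inequality itself: it requires exploiting the bidegree trade-off $\deg c^{(r)}_{i,\delta} + |\delta| \le \deg(f^r)$, which links horizontal and vertical degrees and is unavailable from total-degree bounds alone. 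Without this observation, the obvious recursions only yield $\lambda(f) \le \max\{\lambda_1, \deg_\mu(f)\}$, which is far weaker than the claimed equality.
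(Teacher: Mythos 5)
Your argument is correct: the lower bounds are immediate, and your mixed submultiplicative inequality $\deg(f^{r+s})\le\deg(f^r)\deg(g^s)+\deg_\mu(f^r)\deg(f^s)$, proved via the bidegree trade-off $\deg c^{(r)}_{i,\delta}+|\delta|\le\deg(f^r)$ together with $|\delta|\le\deg_\mu(f^r)$, does yield $\lambda(f)\le\max\{\lambda_1,\lambda_2\}$ by the $r=s$ doubling trick. Note that the paper itself offers no proof of this statement --- it is quoted verbatim from Blanc and van Santen --- and your reconstruction follows essentially the same decomposition into horizontal and vertical degrees used in that reference, so there is nothing further to compare.
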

    \begin{proposition}\label{027}
        If $f=(f_1,\cdots,f_n)\in \mathrm{Aut}(\mathbb{A}_k^n)$ has an $m$-linear projection $\left \{ 1,\cdots,m \right \} $ and 
		$g=id=(x_1,\cdots,x_m)\in \mathrm{Aut}(\mathbb{A}_k^m)$. Then $\lambda(f)=\lambda(f_{m+1},\cdots,f_n)$ and seen 
        $(f_{m+1},\cdots,f_n)$ as an automorphism in $\mathrm{Aut}(\mathbb{A}_K^{n-m})$.
    \end{proposition}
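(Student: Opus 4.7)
The plan is to apply Lemma \ref{001} and unpack the two quantities it produces. With the weight $\mu=(0,\ldots,0,1,\ldots,1)$ (zeros in the first $m$ slots), Lemma \ref{001} gives $\lambda(f)=\max\{\lambda_1,\lambda_2\}$. The hypothesis forces $(f_1,\ldots,f_m)=\mathrm{id}$, whence $\lambda_1=1$ immediately, and so the task reduces to identifying $\lambda_2$ with the dynamical degree of $(f_{m+1},\ldots,f_n)$ viewed inside $\mathrm{Aut}(\mathbb{A}_K^{n-m})$ for $K=k(x_1,\ldots,x_m)$, and then checking that this value is at least $1$ so that it dominates the maximum.

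For the identification of $\lambda_2$, I would first argue by induction on $r$ that $f^r=(x_1,\ldots,x_m,F_{m+1}^{(r)},\ldots,F_n^{(r)})$, and that the tuple $(F_{m+1}^{(r)},\ldots,F_n^{(r)})$ coincides with the $r$-th iterate of $(f_{m+1},\ldots,f_n)$ computed inside $\mathrm{Aut}(\mathbb{A}_K^{n-m})$. This is automatic: composing two maps whose first $m$ components are $x_1,\ldots,x_m$ preserves that property, and the remaining components compose exactly as they would over $K$. Next, for the chosen $\mu$ and any nonzero $p\in k[x_1,\ldots,x_n]$, $\deg_\mu(p)$ is the total degree of $p$ in the variables $x_{m+1},\ldots,x_n$ alone, which agrees with the degree of $p$ when regarded as an element of $K[x_{m+1},\ldots,x_n]$. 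Combining these observations yields $\deg_\mu(f^r)=\deg\bigl((f_{m+1},\ldots,f_n)^r\bigr)$ with the right-hand side computed over $K$, and taking $r$-th roots and passing to the limit gives $\lambda_2=\lambda((f_{m+1},\ldots,f_n))$ over $K$.

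To close the argument, I would invoke Theorem \ref{012} applied to $(f_{m+1},\ldots,f_n)\in\mathrm{Aut}(\mathbb{A}_K^{n-m})$ to obtain $\lambda_2\ge 1=\lambda_1$, so that the maximum in Lemma \ref{001} is attained at $\lambda_2$. I expect no real obstacle here: the argument is essentially a definitional unwinding once Lemma \ref{001} is in hand. The only point that requires some care is the base change from $k$ to $K$ when comparing the weighted degree on $k[x_1,\ldots,x_n]$ with the ordinary degree on $K[x_{m+1},\ldots,x_n]$, but for the specific weight $\mu$ at hand this comparison is transparent and monomial-by-monomial.
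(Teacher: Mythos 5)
Your proposal is correct and follows essentially the same route as the paper: apply Lemma \ref{001} with the weight $\mu=(0,\dots,0,1,\dots,1)$, note $\lambda_1=\lambda(\mathrm{id})=1$, and identify $\lambda_2$ with the dynamical degree of $(f_{m+1},\dots,f_n)$ over $K=k(x_1,\dots,x_m)$. You merely make explicit the identification $\deg_\mu(f^r)=\deg_K\bigl((f_{m+1},\dots,f_n)^r\bigr)$ and the bound $\lambda_2\ge 1$, which the paper's one-line proof leaves implicit.
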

    \begin{proof}
        If $\mu=(\mu_1,\cdots,\mu_n)$ where $\mu_1=\cdots=\mu_m=0$ and $\mu_{m+1}=\cdots=\mu_n=1$, 
        we have $\lambda(f_{m+1},\cdots,f_n)=\lim_{r \to \infty}\sqrt[r]{\deg_\mu(f^r)}$. 
        The conclusion then follows from the Lemma \ref{001}.
    \end{proof}

	\section{Detailed description of main results}
    \subsection{Sketch}\label{103}
    
	Let $\varphi$ be an affine automorphism and $\delta$ be a triangular automorphism. 
	An affine-triangular automorphism of the form $f=\varphi \circ \delta$ can always 
	be conjugated to a permutation-triangular automorphism.
    \begin{lemma}\cite[Proposition 4.1.1]{blanc2022dynamical}
        Each affine-triangular automorphism of $\mathbb{A}_k^n$ is conjugated by an affine automorphism to a permutation-triangular automorphism.
    \end{lemma}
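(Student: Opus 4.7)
The plan is to leverage a Bruhat-style decomposition of the affine part and then absorb the extra triangular factors appropriately. Write $f = \varphi \circ \delta$ with $\varphi(x) = Lx + b$ affine ($L \in GL_n(k)$, $b \in k^n$) and $\delta$ triangular. By the Bruhat decomposition of $GL_n(k)$ with respect to the lower-triangular Borel subgroup $B^-$, one may factor $L = \ell_1 \cdot w \cdot \ell_2$ where $\ell_1, \ell_2 \in B^-$ and $w \in S_n$ is a permutation matrix. Let $\pi$ be the permutation automorphism corresponding to $w$, let $\tau_1(x) = \ell_1 x + b$ (which is triangular, since the translation is absorbed into a lower-triangular affine map), and let $\tau_2(x) = \ell_2 x$. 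This yields $\varphi = \tau_1 \circ \pi \circ \tau_2$, and hence
\[
f \;=\; \tau_1 \circ \pi \circ (\tau_2 \circ \delta).
\]

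Next I would conjugate by the affine automorphism $\tau_1$. Since $\tau_1$ is simultaneously affine and triangular, $\tau_1^{-1}$ is also affine. Then
\[
\tau_1^{-1} \circ f \circ \tau_1 \;=\; \pi \circ (\tau_2 \circ \delta \circ \tau_1).
\]
The composition of triangular automorphisms is again triangular, so $\tau' := \tau_2 \circ \delta \circ \tau_1$ is triangular. This exhibits $\tau_1^{-1} \circ f \circ \tau_1 = \pi \circ \tau'$ as a permutation-triangular automorphism, which is exactly what is required.

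The only nontrivial ingredient is the Bruhat factorization $L = \ell_1 w \ell_2$ with $\ell_1, \ell_2 \in B^-$; the remaining observations—that a lower-triangular affine map is closed under adding a constant translation, and that compositions of triangular automorphisms are triangular (with nonzero diagonal entries preserved by invertibility)—are immediate from Definition~\ref{013}. I expect no substantial obstacle beyond invoking this standard decomposition of $GL_n(k)$; the entire argument is essentially an exercise in bookkeeping once the Bruhat factorization is in hand.
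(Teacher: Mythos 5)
Your argument is correct: the Bruhat factorization $L=\ell_1 w\ell_2$ over the lower-triangular Borel, the observation that lower-triangular affine maps and their compositions are triangular in the sense of Definition~\ref{013}, and the conjugation by $\tau_1$ together give exactly the claimed statement. The paper itself offers no proof—it only cites \cite{blanc2022dynamical}—and your route is essentially the same Bruhat-decomposition-plus-absorption argument used there, so nothing further is needed.
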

    Then we up to conjugation can rewrite all affine-triangular automorphisms as $f=\omega\circ s$ such that $\omega$ is a permutation and $s$ is triangular. 
    In dimension four, let $f=(f_1,f_2,f_3,f_4)$ and write $\omega:(x_1,x_2,x_3,x_4)\mapsto (x_{\omega(1)},x_{\omega(2)},x_{\omega(3)},x_{\omega(4)})$ or $(\omega(1)\ \omega(2)\  \omega(3)\ \omega(4))$ for simplicity. 
    \begin{proposition}\label{011}
		Let $f\in \mathrm{Aut}(\mathbb{A}_k^n)$ be an affine-triangular automorphism with $\left \{ 1,\cdots,n-1 \right \} $ linear projection, and 
		$f'=(f_1,\cdots,f_{n-1})$. Then $\lambda(f)=\lambda(f')$.
	\end{proposition}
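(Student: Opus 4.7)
The plan is to apply Lemma~\ref{001} with the weight vector $\mu=(0,\ldots,0,1)$, which reduces the problem to computing $\lambda_2:=\lim_{r\to\infty}\sqrt[r]{\deg_\mu(f^r)}$. Since $\lambda(f')\ge 1$ by Theorem~\ref{012}, it suffices to prove $\lambda_2=1$, because then $\lambda(f)=\max\{\lambda(f'),\lambda_2\}=\lambda(f')$.

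First I would exploit the affine-triangular structure. Writing $f=\alpha\circ\tau$ with $\alpha$ affine with linear part $A=(a_{ij})$ and $\tau_j=c_jx_j+p_j(x_1,\ldots,x_{j-1})$ triangular, each component takes the form $f_i=\sum_j a_{ij}\tau_j+b_i$. The hypothesis that $f_i\in k[x_1,\ldots,x_{n-1}]$ for $i\le n-1$ forces $a_{in}=0$ for every $i<n$, since among all the $\tau_j$ only $\tau_n$ involves the variable $x_n$. Because $A$ must be invertible (as $f$ is an automorphism), the last column of $A$ is nonzero, so necessarily $a_{nn}\ne 0$. Thus $f_n=a_{nn}c_n x_n+P(x_1,\ldots,x_{n-1})$ with $a_{nn}c_n\ne 0$, so $\deg_\mu(f_n)=1$ and $\deg_\mu(f_i)=0$ for $i<n$.

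Next I would iterate by induction on $r$, showing that $(f^r)_i\in k[x_1,\ldots,x_{n-1}]$ for $i<n$ and $(f^r)_n=(a_{nn}c_n)^r x_n+Q_r(x_1,\ldots,x_{n-1})$ for some polynomial $Q_r\in k[x_1,\ldots,x_{n-1}]$. The inductive step is immediate: since $f_i$ for $i<n$ does not see its $n$-th argument, composition with $f^{r-1}$ keeps those coordinates inside $k[x_1,\ldots,x_{n-1}]$; and since $f_n$ is affine linear in $x_n$ with coefficients depending only on $x_1,\ldots,x_{n-1}$, the same rigidity propagates to the last coordinate. It follows that $\deg_\mu(f^r)=1$ for every $r\ge 1$, hence $\lambda_2=1$, and the proposition is proved.

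The conceptual content is the rigidity of the last coordinate: the linear projection hypothesis combined with the affine-triangular structure prevents $x_n$ from ever appearing to higher than the first power under iteration, so that the $\mu$-degree stays constant. The only mild subtlety in executing this plan is the step ensuring $a_{nn}\ne 0$ from invertibility of $A$, which is essential for obtaining equality $\deg_\mu(f_n)=1$ (rather than merely an upper bound); once this is established, the inductive argument and the application of Lemma~\ref{001} are routine.
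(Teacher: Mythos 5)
Your proposal is correct and follows the same route as the paper: apply Lemma~\ref{001} with $m=n-1$ and the weight $\mu=(0,\ldots,0,1)$, so that $\lambda(f)=\max\{\lambda(f'),\lambda_2\}$, and conclude from $\lambda_2=1$. The paper dismisses this as obvious, whereas you supply the (correct) verification that the affine-triangular structure and invertibility force $f_n=a_{nn}c_nx_n+P(x_1,\ldots,x_{n-1})$ with $a_{nn}c_n\neq0$, so the $x_n$-degree stays $1$ under iteration and $\lambda_2=1$.
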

	\begin{proof}
		It is obvious if $m=n-1$ by Lemma \ref{001}.
	\end{proof}
    \begin{lemma}\cite[Corollary 2.4.3]{blanc2022dynamical}\label{023}
        Let $f\in \mathrm{Aut}(\mathbb{A}_k^4)$ with 2-linear projection. 
        Then $\lambda(f)$ is an integer.
    \end{lemma}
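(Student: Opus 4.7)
After relabeling we may assume the 2-linear projection is $\{1,2\}$, so $(f_1,f_2)\in\mathrm{Aut}(\mathbb{A}_k^2)$ and each $f_i\in k[x_1,x_2,x_3,x_4]$. The strategy is to apply Lemma~\ref{001} with $m=2$ and $\mu=(0,0,1,1)$ to decompose
\[
\lambda(f)=\max\{\lambda_1,\lambda_2\},
\]
where $\lambda_1=\lambda(f_1,f_2)$ and $\lambda_2=\lim_{r\to\infty}\deg_\mu(f^r)^{1/r}$, and then to show that each of $\lambda_1,\lambda_2$ is a nonnegative integer; their maximum will then also be an integer.

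For $\lambda_1$, the pair $(f_1,f_2)$ is an automorphism of $\mathbb{A}_k^2$, so its dynamical degree is an integer by the classical Jung--van der Kulk theorem recalled in the introduction. For $\lambda_2$, I set $K=k(x_1,x_2)$ and view $h:=(f_3,f_4)$ as an element of $\mathrm{Aut}(\mathbb{A}_K^2)$, as in the remark following the definition of $m$-linear projection. The crux of the argument is to identify $\lambda_2$ with the dynamical degree of $h$ computed over $K$; once this is granted, the Jung--van der Kulk theorem applied now over the field $K$ forces $\lambda_2\in\mathbb{Z}$, and we are done.

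To carry out that identification I would prove by induction on $r$ that $\deg_\mu(f^r)$ equals the total $(x_3,x_4)$-degree of the $r$th iterate of $h$ computed over $K$. Writing the $i$th component of $f^r$ as $f_i^{(r)}$ and $f_i=\sum_\alpha a_{i,\alpha}(x_1,x_2)x_3^{\alpha_1}x_4^{\alpha_2}$ for $i\in\{3,4\}$, the recursion
\[
f_i^{(r)}=\sum_\alpha a_{i,\alpha}\bigl(f_1^{(r-1)},f_2^{(r-1)}\bigr)\bigl(f_3^{(r-1)}\bigr)^{\alpha_1}\bigl(f_4^{(r-1)}\bigr)^{\alpha_2}
\]
differs from the analogous $K$-recursion for $h^r$ only through the coefficient substitution $a_{i,\alpha}(x_1,x_2)\mapsto a_{i,\alpha}(f_1^{(r-1)},f_2^{(r-1)})$, which is injective on $k[x_1,x_2]$ because $(f_1^{(r-1)},f_2^{(r-1)})\in\mathrm{Aut}(\mathbb{A}_k^2)$. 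Since this substitution does not touch $x_3,x_4$, the two polynomials share the same $(x_3,x_4)$-support and the inductive step closes.

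\textbf{Main obstacle.} The delicate point is precisely this inductive degree comparison: one must check that cancellations among distinct leading $(x_3,x_4)$-multi-indices $\alpha$ occur in parallel on both sides. This reduces to the straightforward but notation-heavy observation that an automorphism of $k[x_1,x_2]$ does not annihilate any nonzero element, and hence preserves the set of indices with nonzero $a_{i,\alpha}$. Granting this, both $\lambda_1$ and $\lambda_2$ are integers, and so is $\lambda(f)$.
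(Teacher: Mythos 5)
Your reduction via Lemma~\ref{001} and the treatment of $\lambda_1$ by Jung--van der Kulk are fine, but the identification of $\lambda_2$ with the dynamical degree of $h=(f_3,f_4)$ computed over $K=k(x_1,x_2)$ is a genuine gap --- in fact it is false. Writing $g=(f_1,f_2)$ and letting $\sigma_j$ be the coefficient twist $a\mapsto a\circ g^j$, the fiber components of $f^r$ are not $h^r$ but the non-autonomous composition $F_r=(\sigma_{r-1}h)\circ(\sigma_{r-2}h)\circ\cdots\circ(\sigma_0h)$: at each step the outer coefficients $a_{i,\alpha}$ get evaluated at $g^{r-1}$ while the coefficients already produced by earlier iterates are left untouched, so there is no single ring automorphism relating $f_i^{(r)}$ to $h_i^{(r)}$ and the cancellation patterns need not match. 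Your support claim already fails at $r=2$: for $g=(-x_1,x_2)$ and $h=(x_3+x_1x_4^2,\,x_4)$ one gets $F_2=(x_3,x_4)$ while $h^2=(x_3+2x_1x_4^2,\,x_4)$. Worse, the limits themselves can differ. Take $c=(x_3,\,x_4+x_1x_3^2)$, $t=(x_4,x_3)$, $h=c^{-1}\circ t\circ c=(x_4+x_1x_3^2,\;x_3-x_1(x_4+x_1x_3^2)^2)$ and $g=(x_1+1,\,x_2)$; then $f=(x_1+1,\,x_2,\,h)$ lies in $\mathrm{Aut}(\mathbb{A}_k^4)$ with $2$-linear projection $\{1,2\}$, and over $K$ one has $h^r=c^{-1}t^rc$, of bounded degree, so $\lambda_K(h)=1$; but the junctions $(\sigma_jc)\circ(\sigma_{j-1}c)^{-1}=(x_3,\,x_4+x_3^2)$ turn $F_r$ into a reduced word of H\'enon type in the Jung--van der Kulk amalgam of $\mathrm{Aut}(\mathbb{A}_K^2)$, of $K$-degree $2^{r+1}$, so $\lambda_2=2$ and $\lambda(f)=2$. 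Your argument would output $\lambda(f)=1$ for this map, which is wrong.

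So the lemma is true but cannot be obtained by applying Jung--van der Kulk to $h$ itself; one must control the degrees of the twisted compositions $F_r$. Note also that the paper gives no proof of this statement --- it is quoted from Blanc--van Santen, whose Corollary 2.4.3 does exactly this: all factors $\sigma_jh$ have the same degree and the same reduced shape in the amalgamated product structure of $\mathrm{Aut}(\mathbb{A}_K^2)$, and since degrees of reduced words are multiplicative, $\deg_K(F_r)^{1/r}$ converges to an integer, which together with $\lambda_1\in\mathbb{Z}$ gives the claim. To repair your proof you should replace the induction on supports by this word-reduction/degree-multiplicativity argument for non-autonomous compositions of plane automorphisms over $K$.
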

    Below we improve the technique of simplification by conjugations. It is partially inspired by \cite[Lemma 2.6]{maubach2015invariants}.    
    \begin{proposition}\label{031}
    Let $f=(f_1,\cdots,f_n)$ be a permutation-triangular automorphism where $\omega=(\omega(1),\cdots,\omega(n))$. 
    If there is $i\in\left \{ 1,\cdots,n \right \} $. Then  
    \begin{align*}
    &f_i=c_{\omega(i)}x_{\omega(i)}+p_{\omega(i)}(x_1,\cdots,x_{\omega(i)-1})\\
    &f_{\omega^{-1}(i)}=c_ix_i+p_i(x_1,\cdots,x_{i-1})
    \end{align*}
    \begin{enumerate}[leftmargin=*]
        \item If $\omega(i)\neq i$ and if $\omega(i)>\omega(1),\cdots,\omega(i-1)$, then we can eliminate $p_{i}$ by conjugation.
        \item If $\omega(i)\neq i$ and for any $t\in \left \{ 1,2,\cdots,\omega(i)-1 \right \} $, there is 
        $j<i$ such that $\omega(j)=t$, then we can eliminate $p_{\omega(i)}$ by conjugation.
    \end{enumerate}
    \end{proposition}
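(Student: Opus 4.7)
Writing $f=\omega\circ s$ with $s=(s_1,\ldots,s_n)$ triangular, I plan to exhibit an elementary triangular conjugator $h$ such that $\tilde h:=\omega^{-1}h\omega$ is also triangular, in which case
$$hfh^{-1}=h\,\omega\,s\,h^{-1}=\omega\,(\tilde h\,s\,h^{-1}),$$
and $\tilde h s h^{-1}$ is a composition of triangular automorphisms, hence triangular. Thus $hfh^{-1}$ remains permutation-triangular with the same permutation $\omega$, and the remaining freedom in $h$ is used to cancel the targeted $p_i$ (in part (1)) or $p_{\omega(i)}$ (in part (2)). The proof therefore splits into (i) identifying the elementary modification whose $\omega$-conjugate stays triangular, and (ii) solving for its free polynomial so that the corresponding $p$ cancels.

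For (1), I would take $h$ to modify only coordinate $i$:
$$h=(x_1,\ldots,x_{i-1},\,x_i+q,\,x_{i+1},\ldots,x_n),\qquad q\in k[x_1,\ldots,x_{i-1}].$$
A short computation shows $\tilde h$ modifies only coordinate $\omega(i)$, replacing $x_{\omega(i)}$ by $x_{\omega(i)}+q(x_{\omega(1)},\ldots,x_{\omega(i-1)})$. Condition (1) says $\omega(i)$ is the maximum of $\omega(1),\ldots,\omega(i)$, so this new polynomial lies in $k[x_1,\ldots,x_{\omega(i)-1}]$ and $\tilde h$ is triangular; counting $i$ distinct positive integers also gives $\omega(i)>i$ for free. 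Expanding $s'=\tilde h s h^{-1}$ one finds $s'_i=c_ix_i-c_iq+p_i$, while the other components either stay equal to $s_j$ or change only through the substitution $x_i\mapsto x_i-q$ in a coordinate of higher index. Choosing $q=p_i/c_i$ eliminates $p_i$ from position $i$.

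For (2) the same idea is played with the roles of $h$ and $\tilde h$ swapped: take $\tilde h$ to modify only coordinate $\omega(i)$,
$$\tilde h=(x_1,\ldots,x_{\omega(i)-1},\,x_{\omega(i)}+r,\,x_{\omega(i)+1},\ldots,x_n),\qquad r\in k[x_1,\ldots,x_{\omega(i)-1}],$$
and set $h=\omega\tilde h\omega^{-1}$, which modifies only coordinate $i$, sending $x_i$ to $x_i+r(x_{\omega^{-1}(1)},\ldots,x_{\omega^{-1}(\omega(i)-1)})$. This $h$ is triangular exactly under condition (2), and the same condition forces $\omega(i)<i$ by a cardinality argument on $\{\omega(1),\ldots,\omega(i-1)\}\supseteq\{1,\ldots,\omega(i)-1\}$. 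Because $\omega(i)<i$, the substitution in $h^{-1}$ leaves every $s_t$ with $t\leq\omega(i)-1$ untouched, and a direct computation collapses the $\omega(i)$-th component of $s'$ to
$$s'_{\omega(i)}=s_{\omega(i)}+r(s_1,\ldots,s_{\omega(i)-1}).$$
Since $(s_1,\ldots,s_{\omega(i)-1})$ is an invertible triangular automorphism of $\mathbb{A}^{\omega(i)-1}_k$, one can solve uniquely for $r\in k[x_1,\ldots,x_{\omega(i)-1}]$ with $r(s_1,\ldots,s_{\omega(i)-1})=-p_{\omega(i)}$, killing $p_{\omega(i)}$.

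The main (and essentially only) obstacle is the bookkeeping: verifying that \emph{every} component of $s'$ stays triangular in the fixed variable order $x_1,\ldots,x_n$, not just the targeted one. Conditions (1) and (2) are tailored precisely so that the substitutions introduced by $h^{-1}$ involve only variables of smaller index than each triangular slot, so this check reduces to a short case split on $j<\min(i,\omega(i))$, $j=\omega(i)$, $j=i$, and $j$ larger.
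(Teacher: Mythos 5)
Your proposal is correct and is essentially the paper's own argument: in both parts you conjugate by an elementary automorphism altering only the $i$-th coordinate, and in part (2) the equation $r(s_1,\ldots,s_{\omega(i)-1})=-p_{\omega(i)}$ is exactly the paper's step $k[f_{j_1},\ldots,f_{j_{\omega(i)-1}}]=k[x_1,\ldots,x_{\omega(i)-1}]$, so $q(f_{j_1},\ldots,f_{j_{\omega(i)-1}})=p_{\omega(i)}$, since $f_{j_r}=s_{t_r}$. The only difference is presentational: your factorization $hfh^{-1}=\omega\,(\omega^{-1}h\omega)\,s\,h^{-1}$ makes the preservation of the permutation-triangular form (with the same $\omega$) follow from closure of triangular automorphisms, where the paper verifies this by a direct componentwise computation.
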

    \begin{proof}
    We omit some unnecessary invariants for simplicity.
    \begin{enumerate}[leftmargin=*]
        \item Let $g=(x_1,\cdots,x_i-c_i^{-1}p_i(x_1,\cdots,x_{i-1}),\cdots,x_n)$. 
        
        Calculate $f'=g^{-1}\circ f\circ g$:
        $$f\circ g=(f_1',\cdots,f_{\omega^{-1}(i)}'=c_ix_i,f_{\omega^{-1}(i)+1'},\cdots,f_n')$$
        Then     
        $$f'=g^{-1}\circ f\circ g=(f_1',\cdots,f_i'+c_i^{-1}p_i(f_1,\cdots,f_{i-1}),\cdots,f_n')$$ 
        where $f'_u=f_u(x_1,\cdots,x_i-c_i^{-1}p_i(x_1,\cdots,x_{i-1}),\cdots,x_n)$ for each $u\in \mathbb{N}_{\le n}^+$. 
        Then $f'$ remains unchanged on the form of a permutation-triangular automorphism and it has no effect on $\omega$.
        
        \item Let $N=\left \{ j_1,\cdots,j_{\omega(i)-1}\right \}$ such that $\omega(j_r)=t_r\in \left \{ 1,2,\cdots,\omega(i)-1 \right \}$.
        Then $k[f_{j_1},\cdots,f_{j_{\omega(i)-1}}]=k[x_1,\cdots,x_{\omega(i)-1}]$.
        
        Hence there is $q(f_{j_1},\cdots,f_{j_{\omega(i)-1}})=p_{\omega(i)}(x_1,\cdots,x_{{\omega(i)}-1})$.
        
        Let $g=(x_1,\cdots,x_i+q(x_{j_1},\cdots,x_{j_{\omega(i)-1}}),\cdots,x_n)$. 
        
        Calculate $f'=g\circ f\circ g^{-1}$:
        $$
        f\circ g= (f_1(x_i+q(x_{j_1},\cdots,x_{j_{\omega(i)-1}})),\cdots,f_n(x_i+q(x_{j_1},\cdots,x_{j_{\omega(i)-1}})))
        $$
        Then
        $$f'=g^{-1}\circ f\circ g= (f'_1,\cdots,f'_{i}-q(f_{j_1},\cdots,f_{j_{\omega(i)-1}})=c_{\omega(i)}x_{\omega(i)},\cdots,f'_n)
        $$ 
        where $f'_u=f_u(x_1,\cdots,x_i+q(x_{j_1},\cdots,x_{j_{\omega(i)-1}}),\cdots,x_n)$ for each $u\in \mathbb{N}_{\le n}^+$.
        Then $f'$ remains unchanged on the form of a permutation-triangular automorphism and it has no effect on $\omega$.
    \end{enumerate}

    \end{proof}
    Now, we focus on permutation-triangular automorphisms in dimension $n=4$. 
    In total, there are only four possibilities by simplification:
        \begin{enumerate}[leftmargin=*]
            \item $f$ has the maximal eigenvalue $\theta=1$. Then $\lambda(f)=1$ since $1\le \lambda(f) \le \theta=1$ by Theorem \ref{012}.
            \item $f$ has a linear projection $\left \{ i,j \right \} $ where $i,j\in \left \{ 1,2,3,4\right \} $, then $\lambda(f)$ is an integer by Lemma \ref{023}.
            \item $f$ is $\mu$-algebraically stable.
            Then $\lambda(f)=\theta$ by Theorem \ref{012}, and $\theta$ is an algebraic integer of degree $\le4$ 
            since $\theta$ is an eigenvalue of a integer matrix $4\times 4$.
            \item $f$ has a linear projection $\left \{ i \right \} $ where $i\in \left \{ 1,2,3\right \} $. 
        \end{enumerate}
        When case (4) occurs, case (4) can be transferred to cases (1) (2) (3) which are shown in Subsection \ref{061} and Subsection \ref{062}.

	\subsection{Permutation $\omega$ induces linear projection}
    \subsubsection{3-linear projection $\left \{ 1,2,3\right \} $}
    
        There are 6 types such that the dynamical degrees of these types are equal to those of 
		affine-triangular automorphisms in dimension three by Proposition \ref{011}.
            $$\begin{Bmatrix}
               \omega_1=(1234)&\omega_2=(1324)&\omega_3=(2134)\\
               \omega_4=(2314)&\omega_5=(3124)&\omega_6=(3214) 
               \end{Bmatrix}
            $$
    \subsubsection{2-linear projection $\left \{ 1,2\right \} $}	
		
		There are 2 types. They are automorphisms with a 2-linear projection $\left \{ 1 ,2 \right \} $. Then the dynamical degrees of these types are algebraic integers according to the Lemma \ref{023}.
			$$\begin{Bmatrix}
			    \omega_7=(1243)&\omega_8=(2143)
			\end{Bmatrix}$$
    \subsubsection{1-linear projection $\left \{ 1\right \} $}\label{061}
    
        $$\begin{Bmatrix}
            \omega_{9}=(1342)&\omega_{10}=(1423)&\omega_{11}=(1432)
        \end{Bmatrix}$$
        
        We can assume $p_2=0$ since $2=\omega(i)>1=\omega(1)$ by Proposition \ref{031}.
        \begin{itemize}[leftmargin=*]
            \item $\omega_{9}=(1342)$: $f=(c_1x_1+p_1,c_3x_3+p_3,c_4x_4+p_4,c_2x_2)$
            
            Let $g=(x_2,x_1,x_3,x_4)$, then there are $h_i \in k[x_1,\cdots,x_{i-1}]$ such that
            $$f'=g\circ f'\circ g^{-1}=(c_3x_3+h_3,c_1x_2+p_1,c_4x_4+h_4,c_2x_1).$$

            Which is $\omega_{12}=(3241)$.

            \item $\omega_{10}=(1423)$: $f=(c_1x_1+p_1,c_4x_4+p_4,c_2x_2,c_3x_3+p_3)$

            We can assume $p_3=0$ since $i=4,\omega(4)=3,j_1=1,\omega(j_1)=1,j_2=3,\omega(j_2)=2$ by Proposition \ref{031}.
            
            Let $g=(x_1,x_3,x_2,x_4)$, then there is $l_4 \in k[x_1,x_2,x_3]$ such that 
            $$f'=g\circ f\circ g^{-1}=(c_1x_1+p_1,c_2x_3,c_4x_4+l_4,c_3x_2).$$
            Hence $f'\in \omega_{9}$. There is a representation of the relation of them by arrow graph below:
            $$\omega_{10}\xrightarrow{(x_1,x_3,x_2,x_4)} \omega_{9}\xrightarrow{(x_2,x_1,x_3,x_4)} \omega_{12}$$

            \item $\omega_{11}=(1432)$: $f=(c_1x_1+p_1,c_4x_4+p_4,c_3x_3+p_3,c_2x_2)$
           
            Let $g=(x_3,x_1,x_2,x_4)$, then there is $h_4 \in k[x_1,x_2,x_3]$ such that
            $$f'=g\circ f \circ g^{-1}=(c_4x_4+h_4,c_3x_1+p_3(x_2,x_3),c_1x_2+p_1,c_2x_3).$$
            
            Let $\theta$ be the maximal eigenvalue of $f'$, then there is a maximal eigenvector 
			$\mu =(\mu_1,\mu_2,\mu_3,\mu_4)\in \mathbb{R}_{\ge 0}^4\setminus \left \{ 0 \right \} $.
			By the definition of maximal eigenvector
            $$\begin{matrix}
                \mu_2=\mu_3\cdot\theta\\
                \mu_3=\mu_4\cdot\theta
            \end{matrix}$$
            Hence $\mu_2,\mu_3,\mu_4 $ are zero or none of them are zero.
            \begin{enumerate}
                \item If $\mu_2=\mu_3=\mu_4=0$, then $\mu_1\neq0$.It is a contradiction since $x_1\in f'_2$.
                \item If $\mu_2$, $\mu_3$, $\mu_4$ are not zero and $\mu_1=0$, it is a contradiction since $x_4\in f'_1$.
                \item If none of $\mu_1,\mu_2,\mu_3,\mu_4$ are zero, then $f'$ is $\mu$-algebraically stable and $\lambda(f')=\theta$.
            \end{enumerate}  
            
        \end{itemize}
	\subsection{Permutation $\omega$ fixes $f_2$ or $f_3$ but not $f_1$}
            $$\begin{Bmatrix}
                \omega_{12}=(3241)&\omega_{13}=(2431)&\omega_{14}=(4132)\\ \omega_{15}=(4213)&\omega_{16}=(4231)
            \end{Bmatrix}$$
        We can always assume $p_1=0$ by Proposition \ref{031} since $\omega $ does not fix $f_1$.
        \subsubsection{$\omega$ fixes only one of $f_2$ or $f_3$}\label{041}
        
        \begin{itemize}[leftmargin=*]
            \item $\omega_{12}=(3241)$ : $f=(c_3x_3,c_2x_2+p_2,c_4x_4+p_4,c_1x_1)$
            
                We can assume $p_3=0$ since $i=3$ and $\omega(3)=4>\omega(1)=3,\omega(2)=2$ by Proposition \ref{031}.
            \item $\omega_{13}=(4213)$ : $f=(c_4x_4+p_4,c_2x_2+p_2,c_1x_1,c_3x_3)$
            
                We can assume $p_3=0$ since $i=4,\omega(4)=3,j_1=2,\omega(j_1)=1,j_2=3,\omega(j_2)=2$ by Proposition \ref{031}.
            \item $\omega_{14}=(2431)$ : $f=(c_2x_2,c_4x_4+p_4,c_3x_3+p_3,c_1x_1)$
                
                We can assume $p_2=0$ since $i=2,\omega(2)=4>\omega(1)=2$ by Proposition \ref{031}.
            \item $\omega_{15}=(4132)$ : $f=(c_4x_4+p_4,c_1x_1,c_3x_3+p_3,c_2x_2)$
                
                We can assume $p_2=0$ since $i=4,\omega(4)=2,j_1=2,\omega(j_1)=1$ by Proposition \ref{031}.
            \end{itemize}

            Let $s\in \left \{2,3\right \} $ , then there are $r_1,r_2,r_3$ where $\left \{ s,r_1,r_2,r_3 \right \} =\left \{ 1,2,3,4 \right \} $ such that $\omega(r_1)=r_2,\omega(r_2)=r_3,\omega(r_3)=r_1$ 
            since $s$ is the only fixed element by permutation $\omega$. Assume $r_3=4$ and let $\theta$ be the maximal eigenvalue of $f$, then there is a maximal eigenvector 
				$\mu =(\mu_1,\mu_2,\mu_3,\mu_4)\in \mathbb{R}_{\ge 0}^4\setminus \left \{ 0 \right \} $.
				By the definition of maximal eigenvector
				$$\begin{matrix} 
					\mu_{r_1}=\mu_{r_2}\cdot \theta \\ \mu_{r_2}=\mu_{r_3}\cdot \theta 
				\end{matrix} $$
				Hence $\mu_{r_1},\mu_{r_2},\mu_{r_3} $ are zero or none of them are zero.
				\begin{enumerate}
					\item If $\mu_{r_1}=\mu_{r_2}=\mu_{r_3}=0 $ and $\mu_s\neq0$, then $\theta=1$ since $\mu_s=\theta \mu_s$.
					\item If $\mu_{r_1},\mu_{r_2},\mu_{r_3}\neq0 $ and $\mu_s=0$, then $p_s\in k$. Hence $f$ has $\left \{  s\right \}$ linear projection.
					\item If none of $\mu_1,\mu_2,\mu_3,\mu_4$ are zero, then $f$ is $\mu$-algebraically stable and $\lambda(f)=\theta$.    
				\end{enumerate}
            
        \subsubsection{$\omega$ fixes both $f_2$ and $f_3$}\label{081} 
                \begin{itemize}[leftmargin=*]
                
				\item $\omega_{16}=(4231)$ : $f=(c_4x_4+p_4,c_2x_2+p_2,c_3x_3+p_3,c_1x_1)$
                
				Let $\theta$ be the maximal eigenvalue of $f$, then there is a maximal eigenvector 
				$\mu =(\mu_1,\mu_2,\mu_3,\mu_4)\in \mathbb{R}_{\ge 0}^4\setminus \left \{ 0 \right \} $.
				By the definition of maximal eigenvector, $\mu_1=\mu_4\cdot \theta$.
				\begin{enumerate}
					\item If $\mu_1=\mu_4=0$,
					\begin{enumerate}
						\item If $\mu_2=0$, then $\mu_3\neq0$. Hence $\theta=1$ since $\mu_3=\theta \mu_3$. 
						Then $\lambda(f)=1$.
						\item If $\mu_3=0$, then $\mu_2\neq0$. Hence $\theta=1$ since $\mu_2=\theta \mu_2$. 
						Then $\lambda(f)=1$.
						\item If $\mu_2\neq0$ and $\mu_3\neq0$, then $p_4\in k\left [ x_1 \right ] $. Hence $f$ has $\left \{ 1, 4\right \}$ linear projection.
					\end{enumerate}
					\item If $\mu_1,\mu_4\neq0$ and either $\mu_2=0$ or $\mu_3=0$,
					\begin{enumerate}
						\item If $\mu_2=0$ and $\mu_3\neq0$, then $x_1\notin p_2$. Hence $f$ has $\left \{  2\right \}$ linear projection.
						\item If $\mu_3=0$ and $\mu_2\neq0$, then $x_1,x_2\notin p_3$. Hence $f$ has $\left \{  3\right \}$ linear projection.
						\item If $\mu_2=\mu_3=0$, then $x_1\notin p_2$ and $x_1\notin p_3$. Hence $f$ has $\left \{ 2 ,3\right \}$ linear projection.
					\end{enumerate}
					\item If none of $\mu_1,\mu_2,\mu_3,\mu_4$ are zero, then $f$ is $\mu$-algebraically stable and $\lambda(f)=\theta$.
				\end{enumerate}        
        \end{itemize}
        \subsubsection{Six cases in $\omega_{12}\sim \omega_{16}$}\label{062}
        There are 6 cases such that they have linear projection $\left \{ i \right \} $ where $i\in \left \{ 2,3 \right \} $ according to subsection \ref{041} an subsection \ref{081}. 
        Let $d\in k\setminus\left \{ 0  \right \} $ and $p_i\in k\left [ x_1,\cdots,x_{i-1} \right ]$, the cases are 
        \begin{enumerate}[leftmargin=*]
            \item $f_1=(c_3x_3,c_2x_2+d,c_4x_4+p_4,c_1x_1) \ \ \ (\text{from})\ \  \omega_{12}$
            \item $f_2=(c_4x_4+p_4,c_2x_2+d,c_1x_1,c_3x_3) \ \ \ (\text{from})\ \   \omega_{13}$
            \item $f_3=(c_2x_2,c_4x_4+p_4,c_3x_3+d,c_1x_1) \ \ \ (\text{from})\ \   \omega_{14}$
            \item $f_4=(c_4x_4+p_4,c_1x_1,c_3x_3+d,c_2x_2) \ \ \ (\text{from})\ \   \omega_{15}$
            \item $f_5=(c_4x_4+p_4,c_2x_2+d,c_3x_3+p_3,c_1x_1) \ \ \ (\text{from})\ \   \omega_{16}$
            \item $f_6=(c_4x_4+p_4,c_2x_2+p_2,c_3x_3+d,c_1x_1) \ \ \ (\text{from})\ \   \omega_{16}$
        \end{enumerate}
        Let $g_1=(x_2,x_1,x_3,x_4)$, $g_2=(x_1,x_3,x_2,x_4)$ and $g_3=(x_3,x_2,x_1,x_4)$, 
            $$\begin{matrix}
            g_1\circ f_3\circ g_1^{-1}=(c_1x_2,c_4x_4+q_4,c_3x_3+d,c_2x_1)\\
            g_2\circ f_2\circ g_2^{-1}=(c_2x_3,c_3x_2+d,c_4x_4+q_4,c_1x_1)\\
            g_3\circ f_4\circ g_3^{-1}=(c_1x_3,c_2x_2+d,c_4x_4+q_4,c_3x_1)
            \end{matrix}$$
        where $d\in k$ and $q_4\in k\left [ x_1,x_2,x_3 \right ]$. There is a representation of the relation of (1) (2) (3) (4) by arrow graph below:
        $$(4)\overset{g_1}{\longrightarrow}(3)\overset{g_2}{\longrightarrow}(1)\overset{g_3}{\longleftarrow}(2)$$
        And let $g_4=(x_3,x_1,x_2,x_4)$, then 
        $$g_4\circ f_1\circ g_4^{-1}=(c_2x_3+d,c_4x_4+q_4,c_3x_1,c_1x_2)$$
        where $d\in k$ and $q_4\in k\left [ x_1,x_2,x_3 \right ]$. It is a special case of type $\omega_{17}=(3412)$, there is a representation of the relation by arrow graph below:
        $$(1)\overset{g_2}{\hookrightarrow}\omega_{17}$$
        Similarly,
        $$g_2\circ f_6\circ g_2^{-1}=(c_4x_4+q_4,c_3x_2+d,c_2x_3+p_2,c_1x_1)$$
        And
        $$g_1\circ f_5\circ g_1^{-1}=(c_2x_1+d,c_4x_4+q_4,c_3x_3+p_3(x_2,x_1),c_1x_2)$$
        where $d\in k$ and $q_4\in k\left [ x_1,x_2,x_3 \right ]$. There is a representation of the relation of (5) (6) by arrow graph below:
        $$(6)\overset{g_2}{\hookrightarrow}(5)\overset{g_1}{\hookrightarrow}\omega_{11}.$$

    \subsection{Permutation $\omega$ has no fixed elements}    	
            $$\begin{Bmatrix}
                \omega_{17}=(3412)&\omega_{18}=(3421)&\omega_{19}=(4312)&\omega_{20}=(4123)\\
                \omega_{21}=(2341)&\omega_{22}=(3142)&\omega_{23}=(2413)&\omega_{24}=(4321)
            \end{Bmatrix}$$
            We can always assume $p_1=0$ by Proposition \ref{031} since $\omega $ does not fix $f_1$.
            \subsubsection{$p_2=0$ by simplification}	
			\begin{itemize}[leftmargin=*]
				
				\item $\omega_{17}=(3412)$ : $f=(c_3x_3+p_3,c_4x_4+p_4,c_1x_1,c_2x_2)$
                
				We can assume $p_2=0$ since $i=2,\omega(2)=4>\omega(1)=3$ by Proposition \ref{031}.
				Let $\theta$ be the maximal eigenvalue of $f$, then there is a maximal eigenvector 
				$\mu =(\mu_1,\mu_2,\mu_3,\mu_4)\in \mathbb{R}_{\ge 0}^4\setminus \left \{ 0 \right \} $.
				By the definition of maximal eigenvector
				$$\begin{matrix} 
					\mu_1=\mu_3\cdot \theta \\ \mu_2=\mu_4\cdot \theta 
				\end{matrix} $$
				Hence $\mu_1$ and $\mu_3$ are both zero or both not, $\mu_2$ and $\mu_4$ are both zero or both not.
				\begin{enumerate}
					\item If $\mu_1=\mu_3=0$ and $\mu_2,\mu_4 \neq 0$, then $p_3 \in k[x_1]$. 
					Hence $f$ has $\left \{ 1, 3\right \}$ linear projection.
					\item If $\mu_2=\mu_4=0$ and $\mu_1,\mu_3 \neq 0$, then $p_4 \in k[x_2]$. 
					Hence $f$ has $\left \{ 2, 4\right \}$ linear projection.
					\item If none of $\mu_1,\mu_2,\mu_3,\mu_4$ are zero, then $f$ is $\mu$-algebraically stable and $\lambda(f)=\theta$.
				\end{enumerate}
				
				\item $\omega_{18}=(3421)$ : $f=(c_3x_3+p_3,c_4x_4+p_4,c_2x_2,c_1x_1)$
				
				Let $g=(x_2,x_1,x_3,x_4)$, then there are $h_i \in k[x_1,\cdots,x_{i-1}]$ such that
				$$f'=g \circ f \circ g^{-1}=(c_4x_4+h_4,c_3x_3+h_3,c_2x_1,c_1x_2).$$
				
				This is one of the type $\omega_{19}=(4312)$.
				
				\item $\omega_{19}=(4312)$ : $f=(c_4x_4+p_4,c_3x_3+p_3,c_1x_1,c_2x_2)$
                
				We can assume $p_2=0$ since $i=4,\omega(4)=2,j_1=3,\omega(j_3)=1$ by Proposition \ref{031}.
				Let $\theta$ be the maximal eigenvalue of $f$, then there is a maximal eigenvector 
				$\mu =(\mu_1,\mu_2,\mu_3,\mu_4)\in \mathbb{R}_{\ge 0}^4\setminus \left \{ 0 \right \} $.
				By the definition of maximal eigenvector
				$$\begin{matrix} 
					\mu_1=\mu_3\cdot \theta \\ \mu_2=\mu_4\cdot \theta 
				\end{matrix} $$
				Hence $\mu_1$ and $\mu_3$ are both zero or both not, $\mu_2$ and $\mu_4$ are both zero or both not.
				\begin{enumerate}
					\item If $\mu_1=\mu_3=0$ and $\mu_2,\mu_4 \neq 0$. But it is a contradiction since $x_4\in f_1$.
					\item If $\mu_2=\mu_4=0$ and $\mu_1,\mu_3 \neq 0$. But it is a contradiction since $x_3\in f_2$.  
					\item If none of $\mu_1,\mu_2,\mu_3,\mu_4$ are zero, then $f$ is $\mu$-algebraically stable and $\lambda(f)=\theta$.
				\end{enumerate}
				
				\item $\omega_{20}=(4123)$ : $f=(c_4x_4+p_4,c_1x_1,c_2x_2,c_3x_3+p_3)$
                
				We can assume $p_2=0$ since $i=3,\omega(3)=2,j_1=2,\omega(j_1)=1$ by Proposition \ref{031}.
				Let $\theta$ be the maximal eigenvalue of $f$, then there is a maximal eigenvector 
				$\mu =(\mu_1,\mu_2,\mu_3,\mu_4)\in \mathbb{R}_{\ge 0}^4\setminus \left \{ 0 \right \} $.
				By the definition of maximal eigenvector
				$$\begin{matrix} 
					\mu_1=\mu_2\cdot \theta \\ \mu_2=\mu_3\cdot \theta 
				\end{matrix} $$
				Hence $\mu_1,\mu_2,\mu_3 $ are zero or none of them are zero.
				\begin{enumerate}
					\item If $\mu_1=\mu_2=\mu_3=0$ and $\mu_4\neq0$. But it is a contradiction since $x_4\in f_1$.
					\item If $\mu_1,\mu_2,\mu_3\neq0$ and $\mu_4=0$. But it is a contradiction, since $x_1,x_2,x_3\in f_4$.
					\item If none of $\mu_1,\mu_2,\mu_3,\mu_4$ are zero, then $f$ is $\mu$-algebraically stable and $\lambda(f)=\theta$.   
				\end{enumerate}
                \end{itemize}
        \subsubsection{$p_2=p_3=0$ by simplification} 
                \begin{itemize}[leftmargin=*]
			      \item $\omega_{21}=(2341)$ : $f=(c_2x_2,c_3x_3,c_4x_4+p_4,c_1x_1)$
                  
                We can assume $p_2=0$ since $i=2,\omega(2)=3>\omega(1)=2$ by Proposition \ref{031}.
                Then we can still assume $p_3=0$ since $i=3,\omega(3)=4>\omega(1)=2,\omega(2)=3$ by Proposition \ref{031}.

                Let $\theta$ be the maximal eigenvalue of $f$. Then there is a maximal eigenvector 
				$\mu =(\mu_1,\mu_2,\mu_3,\mu_4)\in \mathbb{R}_{\ge 0}^4\setminus \left \{ 0 \right \} $.
				By the definition of maximal eigenvector
                $$\begin{matrix}
                    \mu_2=\mu_1\cdot\theta\\
                    \mu_3=\mu_2\cdot\theta\\
                    \mu_1=\mu_4\cdot\theta
                \end{matrix}$$
                Then none of $\mu_1,\mu_2,\mu_3,\mu_4$ are zero. Hence $f$ is $\mu$-algebraically stable and $\lambda(f)=\theta$.

                \item $\omega_{22}=(3142)$ : $f=(c_3x_3,c_1x_1,c_4x_4+p_4,c_2x_2)$
				
				We can assume $p_3=0$ since $i=3,\omega(3)=4>\omega(1)=3,\omega(2)=1$ by Proposition \ref{031}.
				Then we can still assume $p_2=0$ since $i=4,\omega(4)=2,j_1=2,\omega(j_1)=1$ by Proposition \ref{031}. 
                \item $\omega_{23}=(2413)$ : $f=(c_2x_2,c_4x_4+p_4,c_1x_1,c_3x_3)$
				
				We can assume $p_2=0$ since $i=2,\omega(2)=4>\omega(1)=2$ by Proposition \ref{031}.
				Then we can still assume $p_3=0$ since $i=4,\omega(4)=3,j_1=3,\omega(j_1)=1,j_2=1,\omega(j_2)=2$ by Proposition \ref{031}.
                \end{itemize}

                Let $g_1=(x_2,x_1,x_3,x_4)$ and let $g_2=(x_1,x_3,x_2,x_4)$, there is the relation between 
                $\omega_{21} \sim \omega_{23} $:
                $$\begin{matrix}
                g_1\circ \omega_{21} \circ g_1^{-1}&\subseteq \omega_{22}&\Longleftrightarrow &g_1\circ \omega_{22} \circ g_1^{-1}&\subseteq\omega_{21}\\
                g_2\circ \omega_{22} \circ g_2^{-1}&\subseteq\omega_{23}&\Longleftrightarrow &g_2\circ \omega_{23} \circ g_2^{-1}&\subseteq\omega_{22}\\
                \end{matrix}$$
                since $g_1=g_1^{-1},g_2=g_2^{-1}$. This relation can be represented by the arrow graph:
                $$\omega_{21}\overset{g_1}{\rightleftharpoons } \omega_{22} 
                \overset{g_2}{\rightleftharpoons } \omega_{23}
                $$
                
	\subsubsection{No elimination by simplification}
                \begin{itemize}[leftmargin=*]
				\item $\omega_{24}=(4321)$ : $f=(c_4x_4+p_4,c_3x_3+p_3,c_2x_2+p_2,c_1x_1)$
				
				Let $\theta$ be the maximal eigenvalue of $f$, then there is a maximal eigenvector 
				$\mu =(\mu_1,\mu_2,\mu_3,\mu_4)\in \mathbb{R}_{\ge 0}^4\setminus \left \{ 0 \right \} $.
				By the definition of maximal eigenvector, $\mu_1=\mu_4\cdot \theta$.
				\begin{enumerate}
					\item If $\mu_1=\mu_4=0$,
					\begin{enumerate}
						\item If $\mu_2=0$ then $\mu_3\neq0$, then $f_3=c_2x_2+p_2\in k\left [ x_1,x_2 \right ] $. 
						It is a contradiction since we can get $\theta =0$.
						\item If $\mu_3=0$ then $\mu_2\neq0$. It is a contradiction since $x_2\in f_3$.
						\item If $\mu_2\neq0$ and $\mu_3\neq0$, then $p_4\in k\left [ x_1 \right ] $. Hence $f$ has $\left \{ 1, 4\right \}$ linear projection.
					\end{enumerate}
					\item If $\mu_1,\mu_4\neq0$,
					\begin{enumerate}
						\item If $\mu_2=0$ then $\mu_3=0$ since $x_3\in f_2$. Hence 
						$x_1 \notin p_2$ and $x_1 \notin p_3$ means that $f$ has $\left \{ 2 ,3\right \}$ linear projection.
						\item If $\mu_3=0$ then $\mu_2=0$ since $x_2\in f_3$. Hence 
						$x_1 \notin p_2$ and $x_1 \notin p_3$ means that $f$ has $\left \{ 2, 3\right \}$ linear projection.
						
					\end{enumerate}
					\item If none of $\mu_1,\mu_2,\mu_3,\mu_4$ are zero, then $f$ is $\mu$-algebraically stable and $\lambda(f)=\theta$.
				\end{enumerate}	     	    
			      \end{itemize}
        \begin{proof}[Proof of Theorem \ref{016}]
            The relation $\omega_1 \sim \omega_{24}$ completes the proof.
        \end{proof}

    \section{Two derivative results in higher dimensions}

    \subsection{Generalizations of shift-like automorphisms}
    For each $n\geq1$, a shift-like automorphism of $\mathbb{A}_k^n$ is of the form $(x_{n+1}+p(x_1,\cdots,x_n),x_1,\cdots,x_n)$ where $p\in k[x_1,\cdots,x_n]$.
    More information on the dynamics of such automorphisms has been studied in \cite{bera2018some}. The following lemma is the generalization of dynamical degrees of shift-like automorphisms.
    \begin{lemma}\cite[Proposition 4.2.3]{blanc2022dynamical}\label{007}
		Let $f$ be a permutation-elementary automorphism in dimension $n$, then $\lambda(f)$ is an algebraic integer of degree $\le n-1$.
	\end{lemma}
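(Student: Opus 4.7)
The plan is to read off $\lambda(f)$ from the very restricted set of matrices contained in $f$. First, I would write $f=\omega\circ\tau$ with $\omega$ a permutation and $\tau$ elementary, so that $\tau_i=x_i$ for $i<n$ and $\tau_n=c_nx_n+p_n(x_1,\dots,x_{n-1})$. Setting $k:=\omega^{-1}(n)$, a direct computation gives $f_k=c_nx_n+p_n(x_1,\dots,x_{n-1})$ while $f_i=x_{\omega(i)}$ for every $i\neq k$.

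Next I would classify the matrices $M=(m_{i,j})\in\mathrm{Mat}_n(\mathbb{N})$ contained in $f$. For every $i\neq k$ the $i$-th row is forced to be $e_{\omega(i)}^{T}$, since $f_i$ is the single monomial $x_{\omega(i)}$. For the $k$-th row there are two options: either select the $x_n$-term (giving row $e_n^{T}$), or select a monomial $x_1^{a_1}\cdots x_{n-1}^{a_{n-1}}$ of $p_n$ (giving row $(a_1,\dots,a_{n-1},0)$). In the first case $M$ is the permutation matrix of $\omega$, whose eigenvalues are roots of unity of modulus $1$. In the second case one has $m_{i,n}=\delta_{n,\omega(i)}=0$ for every $i\neq k$ (because $\omega(i)\neq n$) while $m_{k,n}=0$ by choice, so the last column of $M$ is identically zero. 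Expanding the characteristic polynomial along this column yields
\[
\det(\lambda I-M)=\lambda\cdot Q_M(\lambda),
\]
with $Q_M\in\mathbb{Z}[\lambda]$ monic of degree $n-1$; in particular every non-zero eigenvalue of $M$ is an algebraic integer of degree at most $n-1$.

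Combining the two cases, the maximal eigenvalue $\theta$ of $f$ is either $\theta=1$—in which event Theorem \ref{012}(2) forces $\lambda(f)=1$—or $\theta>1$, in which event $\theta$ arises as a root of some $Q_M$ and is thus an algebraic integer of degree at most $n-1$.

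The remaining step, and the main obstacle, is to upgrade this conclusion from $\theta$ to $\lambda(f)$ by establishing $\lambda(f)=\theta$. My plan is a short induction on $n$. If some maximal eigenvector lies in $(\mathbb{R}_{>0})^{n}$, Theorem \ref{012}(2) gives $\lambda(f)=\theta$ at once. Otherwise, the zero-locus $Z=\{j:\mu_j=0\}$ of any maximal eigenvector produces a non-trivial linear projection of $f$ onto the indices in $\{1,\dots,n\}\setminus Z$, and Lemma \ref{001} together with Proposition \ref{027} reduces the computation of $\lambda(f)$ to that of a strictly lower-dimensional permutation-elementary automorphism obtained by restricting $\omega$ and $p_n$. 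The technically delicate part is the bookkeeping required to check that this restricted map is once more permutation-elementary (rather than merely permutation-triangular), so that the inductive hypothesis applies and the bound $\deg(\lambda(f))\le n-1$ is preserved at each step.
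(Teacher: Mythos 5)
The first half of your argument is sound: for $f=\omega\circ\tau$ permutation-elementary the matrices contained in $f$ are exactly the permutation matrix of $\omega$ and the matrices whose $k$-th row comes from a monomial of $p_n$, and in the latter case the last column vanishes, so every nonzero eigenvalue is a root of a monic integer polynomial of degree $n-1$; together with $1\le\lambda(f)\le\theta$ this handles $\theta=1$ and shows $\theta$ is an algebraic integer of degree $\le n-1$ (you should invoke Perron--Frobenius to know that $\theta$, being the spectral radius of a nonnegative integer matrix, is itself an eigenvalue and hence a root of some $Q_M$ -- minor, fixable). Note also that the paper offers no proof to compare with: Lemma \ref{007} is quoted from \cite[Proposition 4.2.3]{blanc2022dynamical}, so what you are attempting is a reproof of that cited result.

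The genuine gap is the last step, the one you yourself flag. First, the direction of the projection is backwards: if $\mu_i=0$ then $\deg_\mu(f_i)=\theta\mu_i=0$ forces $f_i$ to involve only variables $x_j$ with $\mu_j=0$, so the linear projection obtained from a degenerate maximal eigenvector is onto $Z=\{j:\mu_j=0\}$, not onto its complement (compare the paper's treatment of $\omega_{17}$, where $\mu_1=\mu_3=0$ yields the $\{1,3\}$ projection). Second, and more seriously, Lemma \ref{001} gives $\lambda(f)=\max\{\lambda_1,\lambda_2\}$, and only $\lambda_1=\lambda\bigl((f_i)_{i\in Z}\bigr)$ is of the form your induction covers. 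The quantity $\lambda_2=\lim_r\sqrt[r]{\deg_\nu(f^r)}$ is a \emph{relative} degree growth of a skew product over the $Z$-factor; Proposition \ref{027} identifies it with a dynamical degree over $K$ only when the $Z$-factor is the identity, which typically fails. For instance, for $f=(x_2,x_1,x_4+x_1x_3^2,x_3)$ one has $\theta=2$, every maximal eigenvector vanishes on $Z=\{1,2\}$, the $Z$-factor is the swap $(x_2,x_1)$, and the fibered map $(x_4+x_1x_3^2,x_3)$ has coefficients that are permuted at each iteration. Passing to $f^2$ to make the base the identity does not rescue the induction: the resulting automorphism of $\mathbb{A}^2_K$ is a H\'enon-type composition which is no longer permutation-elementary, and controlling $\lambda(f)^2$ instead of $\lambda(f)$ wrecks the degree-$\le n-1$ bookkeeping. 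So the reduction as sketched does not close, and this missing step is precisely the substantive content of Blanc--van Santen's Proposition 4.2.3; as it stands your argument only proves the bound for $\theta$, not for $\lambda(f)$.
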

    
    \begin{corollary}
		Let $T_1,T_2\in \mathrm{Aut}(\mathbb{A}_k^n) $ be two permutation-triangular automorphisms such that 
		\begin{align*}
			T_1 & = (x_1,x_n+p_n,\cdots,x_3+p_3,x_2+p_2) \\
			T_2 & = (x_2+p_2,x_3+p_3,\cdots,x_n+p_n,x_1)
		\end{align*}
		where $p_i\in k[x_1,\cdots,x_{i-1}]$. Then $\lambda(T_1)$ and $\lambda(T_2)$ are algebraic integers of degree $\le n-1$.
	\end{corollary}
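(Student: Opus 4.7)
The approach is to reduce each of $T_1,T_2$ to a form where Lemma~\ref{007} applies, the two cases differing in whether we stay in $\mathbb{A}^n_k$ (for $T_2$) or first pass to $\mathbb{A}^{n-1}_K$ via a 1-linear projection (for $T_1$).

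For $T_2$, the permutation is the cyclic shift $\omega(i)=i+1$ for $i<n$ and $\omega(n)=1$, which is strictly increasing on $\{1,\dots,n-1\}$. Proposition~\ref{031}(1) therefore applies at each $i\in\{2,\dots,n-1\}$ in turn; since the conjugation used to eliminate $p_i$ only modifies polynomials $p_j$ with $j>i$, earlier eliminations are preserved. After the $n-2$ conjugations, $T_2$ is conjugate to $(x_2,\dots,x_{n-1},x_n+\tilde p_n,x_1)$, a permutation-elementary automorphism of $\mathbb{A}^n_k$, and Lemma~\ref{007} combined with Lemma~\ref{008} delivers the claimed bound for $\lambda(T_2)$.

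For $T_1$, I would use that $(T_1)_1=x_1$ to exhibit a 1-linear projection $\{1\}$, so Proposition~\ref{027} gives $\lambda(T_1)=\lambda(T_1')$ with $T_1'=(x_n+p_n,\dots,x_2+p_2)\in\mathrm{Aut}(\mathbb{A}^{n-1}_K)$ and $K=k(x_1)$. In the coordinates $u_j=x_{j+1}$, $T_1'$ is permutation-triangular with the reverse permutation $\omega'(j)=n-j$, and one application of Proposition~\ref{031} eliminates the constant $p_2\in K$, producing $(T_1')_{n-1}=u_1$. I would then perform a maximal-eigenvector case analysis in the spirit of the $\omega_{11}$ treatment in Section~3: the relation $(T_1')_{n-1}=u_1$ forces $\mu_1=\theta\mu_{n-1}$, and the other components give further linkings of the $\mu_j$. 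Either $\mu$ has all positive entries and $T_1'$ is $\mu$-algebraically stable by Theorem~\ref{012}, in which case $\lambda(T_1')=\theta$ is an eigenvalue of an $(n-1)\times(n-1)$ nonnegative integer matrix---hence an algebraic integer of degree at most $n-1$---or some $\mu_j$ vanishes, which forces the corresponding polynomial contribution to drop out and exposes a further linear projection, allowing a dimensional descent to a strictly smaller permutation-triangular automorphism.

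The main obstacle is making the case analysis for $T_1'$ uniform in $n$: the reverse permutation in dimension $n-1$ decomposes into $\lfloor (n-1)/2\rfloor$ transpositions together with possible fixed points, so the eigenvector relations and the bookkeeping of which $\mu_j$ can vanish are more varied than in the $n=4$ base case of Section~3. One must verify in each such scenario either that the vanishing contradicts the presence of a leading linear term in some $(T_1')_i$ (as in cases (1)--(2) of the $\omega_{11}$ analysis) or that it genuinely exhibits a linear projection on a proper subset of coordinates, so that the induction---driven by Proposition~\ref{027}---ultimately terminates at a permutation-elementary or trivial situation to which Lemma~\ref{007} applies.
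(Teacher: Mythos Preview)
Your treatment of $T_2$ coincides with the paper's: both reduce via successive elementary conjugations (packaged here as repeated applications of Proposition~\ref{031}(1)) to the permutation-elementary form $(x_2,\dots,x_{n-1},x_n+\tilde p_n,x_1)$ and then invoke Lemma~\ref{007}.

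For $T_1$ your route diverges substantially. The paper does not pass to a $1$-linear projection or run any eigenvector analysis; it applies the \emph{same} conjugation scheme as for $T_2$. Conjugating $T_1$ by $g_1=(x_1,x_2-p_2,x_3,\dots,x_n)$ produces $(x_1,x_n+q_n,\dots,x_3+q_3,x_2)$ with each $q_i\in k[x_1,\dots,x_{i-1}]$; the paper then conjugates by $g_2=(x_1,x_2,x_3-q_3,x_4,\dots,x_n)$ to eliminate $q_3$, and continues in this fashion until only the top term survives, obtaining a permutation-elementary automorphism of $\mathbb{A}^n_k$ to which Lemma~\ref{007} applies directly. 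So from the paper's point of view $T_1$ and $T_2$ are handled symmetrically, entirely within $\mathbb{A}^n_k$, with no descent to $\mathbb{A}^{n-1}_K$.

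Your proposed alternative for $T_1$ has a genuine gap, which you yourself flag: the maximal-eigenvector case analysis for the reverse permutation on $n-1$ letters does not organize into a finite check the way the $n=4$ cases of Section~3 do. That permutation decomposes into $\lfloor(n-1)/2\rfloor$ disjoint transpositions (plus a possible fixed point), and when some $\mu_j$ vanishes the resulting linear projection does not in general produce an automorphism of the same $T_1$-shape, so the ``dimensional descent'' you sketch has no clear induction hypothesis to return to. As written, this half of the proposal is an outline rather than a proof.
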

    \begin{proof}
		
		We first prove for $T_1$. Let $g_1=(x_1,x_2-p_2,x_3,\cdots,x_n)$, then $g_1^{-1}\circ T_1 \circ g_1$ is equal to
		$$
		\begin{bmatrix}
			x_1,\\
			x_n+p_n(x_1,x_2-p_2,\cdots)+p_2(x_1),\\
			x_{n-1}+p_{n-1}(x_1,x_2-p_2,\cdots),\\
			\vdots\\
			x_3+p_3(x_1,x_2-p_2,x_3),\\
			x_2\\
		\end{bmatrix}.
	    $$
		We rewrite it into 
		$$T_1'=(x_1,x_n+q_n,x_{n-1}+q_{n-1},\cdots,x_3+q_3,x_2)$$
		where $q_i\in k[x_1,\cdots,x_{i-1}]$.
		We can still assume $g_2=(x_1,x_2,x_3-q_3,\cdots)$ and calculate $g_2^{-1}\circ T_1'\circ g_2$ to eliminate $q_3$.
		After finite steps, it becomes a permutation-elementary automorphism and its dynamical degree is an algebraic integer 
		by Lemma \ref{007}. Then $\lambda(T_1)$ is an algebraic integer, since the dynamical degree is an invariant under conjugation by Lemma \ref{008}. 

        Now we prove for $T_2$.
        Let $g_1=(x_1,x_2-p_2,x_3,\cdots,x_n)$, then $g_1^{-1}\circ T_2 \circ g_1$ is equal to
		$$
		\begin{bmatrix}
			x_2,\\
			x_3+p_3(x_1,x_2-p_2,x_3)+p_2(x_2),\\
			\vdots\\
			x_{n-1}+p_{n-1}(x_1,x_2-p_2,\cdots),\\
			x_n+p_n(x_1,x_2-p_2,\cdots),\\
			x_1\\
		\end{bmatrix}.
		$$ 
		We rewrite it into 
		$$T_2'=(x_2,x_3+q_3,\cdots,x_{n-1}+q_{n-1},x_n+q_n,x_1)$$
		where $q_i\in k[x_1,\cdots,x_{i-1}]$.
		We can still assume $g_2=(x_1,x_2,x_3-q_3,\cdots)$ and calculate $g_2^{-1}\circ T_2'\circ g_2$ to eliminate $q_3$.
		After finite steps, it becomes a permutation-elementary automorphism and its dynamical degree is an algebraic integer 
		by Lemma \ref{007}. Then $\lambda(T_2)$ is an algebraic integer, since the dynamical degree is an invariant under conjugation by Lemma \ref{008}. 
        \end{proof}
    
    \subsection{Two automorphisms in dimension 5}
    In \cite{sun2014classification}, Sun classified the quadratic homogeneous automorphisms of $\mathrm{Aut}(\mathbb{A}_\mathbb{C}^5)$. 
    The quadratic homogeneous automorphisms are of the form $f_i=x_i+p_i$ where $p_i\in k[x_1,\cdots,x_{i-1},x_{i+1},\cdots,x_n]$ are quadratic homogeneous polynomials.
    There are only two possibilities which are not linearly triangularizable (cannot be conjugated into affine triangular forms).
    \begin{lemma}\cite[Theorem 3.2]{sun2014classification}
        Let $S_1,S_2\in\mathrm{Aut}(\mathbb{A}_\mathbb{C}^5)$ such that
        $$S_1=\begin{bmatrix}
		x_1,\\
		x_2+o_1x_1^2,\\
		x_3-x_1x_4+\tau _2(x_1,x_2),\\
		x_4+x_1x_5+x_2x_3+\tau_3(x_1,x_2),\\
		x_5+x_2x_4+\tau_4(x_1,x_2)\\
	\end{bmatrix}$$
        where $o_1\in\mathbb{C} $ and $\tau_2,\tau_3,\tau_4$ are quadratic homogeneous polynomials.
        $$S_2=  
            \begin{bmatrix} 
            x_1,\\
            x_2+ax_4^2+o_1x_1^2,\\
            x_3-2ax_4x_5+x_1x_2+o_2x_1^2,\\
            x_4+ax_5^2+x_1x_3+o_3x_1^2,\\
            x_5+x_1x_4+o_4x_1^2
            \end{bmatrix}$$
        where $a\neq0$ and $o_1,o_2,o_3,o_4\in \mathbb{C}$. Then $S_1$ and $S_2$ are not linearly triangularizable.
    \end{lemma}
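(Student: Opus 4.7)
My plan is to translate the property of being \emph{linearly triangularizable} into a simultaneous strict-triangularization problem for five constant matrices and then obstruct it by a kernel intersection.

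Both $S_1$ and $S_2$ have the form $\mathrm{id}+p$ with $p$ a vector of quadratic homogeneous polynomials, so any affine conjugate $\psi^{-1}\circ S_i\circ\psi$ with $\psi(x)=Lx+v$ equals $x+L^{-1}p(Lx+v)$ and has Jacobian $I+L^{-1}Dp(Lx+v)L$. Since both the triangular and affine-triangular shapes force the Jacobian of the conjugate to be lower triangular with $1$'s on the diagonal, linear triangularizability is equivalent to $L^{-1}Dp(y)L$ being strictly lower triangular in the standard basis for every $y\in\mathbb{C}^{5}$ and some $L\in GL_{5}(\mathbb{C})$ (the translation $v$ plays no role). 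Writing $Dp(y)=\sum_{k=1}^{5}y_{k}A_{k}$, this reduces to simultaneous strict lower triangularization of the five constant matrices $A_{1},\dots,A_{5}\in\mathrm{Mat}_{5}(\mathbb{C})$. A first necessary condition is the existence of a common nonzero kernel vector, namely the bottom of the flag, so it suffices to show $\bigcap_{k=1}^{5}\ker A_{k}=\{0\}$ for both $S_{1}$ and $S_{2}$.

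The next step is to compute these matrices via $A_{k}=(\partial^{2}p_{r}/\partial x_{s}\,\partial x_{k})_{r,s}$ and carry out the kernel intersection. The structural feature that makes the argument uniform in the parameters is that $o_{j}$, $\tau_{j}$ (and $a$ for $S_{2}$) appear only in $A_{1}$ and $A_{2}$; the matrices $A_{3},A_{4},A_{5}$ are entirely determined by the rigid cross-monomials $-x_{1}x_{4}$, $x_{1}x_{5}+x_{2}x_{3}$, $x_{2}x_{4}$ for $S_{1}$ and $-2ax_{4}x_{5}$, $ax_{5}^{2}$, $x_{1}x_{2}$, $x_{1}x_{3}$, $x_{1}x_{4}$ for $S_{2}$. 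I would first compute $\ker A_{3}\cap\ker A_{4}\cap\ker A_{5}$ in the standard basis, which I expect to collapse to $\mathbb{C}e_{3}$ for $S_{1}$ and to $\mathrm{span}(e_{2},e_{3})$ for $S_{2}$. Then the action of $A_{1}$ and $A_{2}$ on that residual subspace is controlled only by their rigid entries: $A_{2}e_{3}=e_{4}$ kills $\mathbb{C}e_{3}$ for $S_{1}$, while $A_{1}e_{2}=e_{3}$ together with $A_{1}e_{3}=e_{4}$ kills $\mathrm{span}(e_{2},e_{3})$ for $S_{2}$.

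The main technical point to verify is that the free parameters are genuinely irrelevant: this is ensured because all parameter-dependent entries of $A_{1},A_{2}$ sit in columns $1$--$2$ for $S_{1}$ (respectively column $1$ for $S_{2}$), while the residual subspace $\ker A_{3}\cap\ker A_{4}\cap\ker A_{5}$ is already contained in $\{v_{1}=v_{2}=0\}$ for $S_{1}$ (respectively $\{v_{1}=0\}$ for $S_{2}$). Hence no finer Lie-theoretic invariant (such as an Engel-type analysis of the Lie algebra generated by $\{A_{k}\}$) is required, and the non-triangularizability of both $S_{1}$ and $S_{2}$ reduces to an explicit parameter-free kernel computation.
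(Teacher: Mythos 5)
This lemma is imported verbatim from \cite[Theorem 3.2]{sun2014classification}; the paper itself gives no proof, so there is no in-house argument to compare with, and your proposal has to stand on its own as an independent verification of the non-triangularizability statement. Your overall reduction is sound: writing $Dp(y)=\sum_k y_kA_k$ with $(A_k)_{rs}=\partial^2p_r/\partial x_s\partial x_k$, any affine conjugation $\psi=Lx+v$ taking $x+p$ to a triangular map forces $L^{-1}A_kL$ to be strictly lower triangular for every $k$, hence $Le_5$ is a nonzero vector in $\bigcap_k\ker A_k=\{v:\nabla p_r(v)=0\ \text{for all }r\}$; so triviality of that intersection does rule out linear triangularizability. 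Two caveats on how you justify this: the claim that an \emph{affine-triangular} target forces a lower-triangular Jacobian is false (the Jacobian of $\alpha\circ\tau$ is $A\,J_\tau$), and even for a triangular target the vanishing of the diagonal of $L^{-1}Dp(y)L$ is not "by shape" but follows from a homogeneity argument (the diagonal entries of $L^{-1}Dp(Lx+v)L$ are affine in $x$ and must equal constants, so their linear parts vanish; equivalently $Dp(y)$ is nilpotent). Since Sun's notion is conjugation to a triangular map, this is all you need.

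The execution, however, has a concrete error in the $S_1$ case: $\ker A_3\cap\ker A_4\cap\ker A_5$ is not $\mathbb{C}e_3$ but $\{v:v_1=v_2=0\}=\mathrm{span}(e_3,e_4,e_5)$, because the only monomials of $S_1$ involving an index in $\{3,4,5\}$ are $-x_1x_4$, $x_1x_5$, $x_2x_3$, $x_2x_4$, and these constrain only $v_1,v_2$. Consequently the single relation $A_2e_3=e_4$ does not finish the proof as you propose; you must check that $A_1$ and $A_2$ have no common kernel vector in this three-dimensional residual space. This does hold and is parameter-free: for $v=(0,0,v_3,v_4,v_5)$ one computes $A_1v=(0,0,-v_4,v_5,0)$ and $A_2v=(0,0,0,v_3,v_4)$, so $v=0$, completing the $S_1$ case. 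Also, your structural remark that $a$ appears only in $A_1,A_2$ is wrong: $a$ sits in $A_4$ and $A_5$ (via $ax_4^2$, $-2ax_4x_5$, $ax_5^2$), and the hypothesis $a\neq0$ is precisely what makes $\ker A_4\cap\ker A_5$ force $v_4=v_5=0$; with that understood, your $S_2$ computation ($\mathrm{span}(e_2,e_3)$ residual space, killed by $A_1e_2=e_3$, $A_1e_3=e_4$) is correct. So the method is viable, but the $S_1$ step as written would not close the argument without the repair above.
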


    Now we give the description of $\lambda(S_1)$ and $\lambda(S_2)$.

    \begin{remark}\label{002}
		Assume that $f=(f_1,\cdots,f_n)\in \mathrm{Aut}(\mathbb{A}_k^n)$ has $m$-linear projection $\left \{ 1,\cdots,m \right \} $,  
		$g=(f_1,\cdots,f_m)\in \mathrm{Aut}(\mathbb{A}_k^m)$, then $g$ can also be seen as an automorphism of $\mathbb{A}_k^n$,
		which is $g=(f_1,\cdots,f_m,x_{m+1},\cdots,x_n)\in \mathrm{Aut}(\mathbb{A}_k^n)$.
		Write $F=(x_1,\cdots,x_m,f_{m+1},\cdots,f_n)\in \mathrm{Aut}(\mathbb{A}_k^n)$, $f=g\circ F$. Then 
		$$f^r = (g\circ F)^r= g^r\circ (g^{1-r}\circ F\circ g^{r-1})\circ \cdots \circ (g^{-1}\circ F\circ g)\circ F.$$
		Hence $f^r=g^r\circ G_r$ if we denote $G_r=(g^{1-r}\circ F\circ g^{r-1})\circ \cdots \circ (g^{-1}\circ F\circ g)\circ F$.
	\end{remark}

    \begin{proposition}\label{006}
		Assume that $f=(f_1,\cdots,f_n)\in \mathrm{Aut}(\mathbb{A}_k^n)$ has $m$-linear projection $\left \{ 1,\cdots,m \right \} $ and 
		$g=(f_1,\cdots,f_m)\in \mathrm{Aut}(\mathbb{A}_k^m)$. 
		Then $\lambda(f)=\lambda(g)$ if $(f_{m+1},\cdots,f_n)\in \mathrm{Aut}(\mathbb{A}_K^{n-m})  $ is affine.
	\end{proposition}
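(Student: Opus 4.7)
The plan is to apply Lemma~\ref{001}, which gives $\lambda(f) = \max\{\lambda_1,\lambda_2\}$ with $\lambda_1 = \lambda(g)$ and $\lambda_2 = \lim_{r\to\infty}\sqrt[r]{\deg_\mu(f^r)}$ for the weight $\mu = (0,\ldots,0,1,\ldots,1)$ (zeros in the first $m$ slots). Since $\lambda(g) \ge 1$ by Theorem~\ref{012}, it suffices to prove $\lambda_2 \le 1$; the equality $\lambda(f) = \lambda(g)$ then follows immediately.

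To bound $\lambda_2$, I would use the decomposition from Remark~\ref{002}: setting $F = (x_1,\ldots,x_m,f_{m+1},\ldots,f_n)$, one has $f^r = g^r \circ G_r$ with
\[
G_r = (g^{1-r}\circ F\circ g^{r-1})\circ\cdots\circ (g^{-1}\circ F\circ g)\circ F.
\]
The key claim is that $G_r$ is the identity on the first $m$ coordinates, while its last $n-m$ coordinates are affine in $x_{m+1},\ldots,x_n$ with coefficients in $K = k(x_1,\ldots,x_m)$. For each individual factor $g^{-s}\circ F\circ g^s$, the first $m$ coordinates are fixed, because $g^s$ and $g^{-s}$ act only on them while $F$ leaves them alone; the last $n-m$ coordinates equal $f_j(g^s_1,\ldots,g^s_m,x_{m+1},\ldots,x_n)$ for $j > m$, which is affine in $x_{m+1},\ldots,x_n$ over $K$ by hypothesis, since substituting $g^s_i \in k[x_1,\ldots,x_m]\subset K$ for $x_i$ (with $i\le m$) merely replaces one set of $K$-coefficients by another. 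Composition of such fibre-affine self-maps is again fibre-affine, so $G_r$ inherits this form.

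It remains to combine this with the outer factor $g^r$. Since $g^r$ is the identity on the last $n-m$ coordinates and $G_r$ is the identity on the first $m$, one finds $(f^r)_i = g^r_i(x_1,\ldots,x_m)$ for $i \le m$, which has $\mu$-degree $0$, and $(f^r)_i = (G_r)_i$ for $i > m$, which has $\mu$-degree at most $1$. Hence $\deg_\mu(f^r) \le 1$ for every $r$, so $\lambda_2 \le 1$, and the conclusion follows from Lemma~\ref{001}.

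I do not anticipate a serious obstacle. The single essential point is that being affine along the fibres over $K$ is preserved under conjugation by $g^s$ and under composition; this is precisely where the hypothesis that $(f_{m+1},\ldots,f_n)$ is affine over $K$ enters. Everything else is routine bookkeeping with the weighted degree.
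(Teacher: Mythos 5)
Your proof is correct and follows essentially the same route as the paper's: the decomposition $f^r=g^r\circ G_r$ from Remark~\ref{002}, the observation that each conjugate $g^{-s}\circ F\circ g^s$ (and hence $G_r$) is affine over $K$ in the fibre variables $x_{m+1},\dots,x_n$, the resulting bound $\deg_\mu(f^r)\le 1$, and then Lemma~\ref{001} together with $\lambda(g)\ge 1$. If anything, you verify the fibre-affineness of the conjugates and their compositions more carefully than the paper, which simply asserts that $G_r$ is affine.
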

	\begin{proof}
		We can write $f^r=g^r\circ G_r$ by Remark \ref{002}. Then $G_r$ is affine and $\mathrm{deg_\mu}(f^r)=1 $ since the compositions of the affine automorphisms are still affine.
		
		Let $\mu =(0,\cdots,0,1,\cdots,1)$ where $\mu_m=0$ and $\mu_{m+1}=\cdots=1$, then $\lim_{r \to \infty } \sqrt[r]{\mathrm{deg_\mu}(f^r)}=1$.
		Then $\lambda(f)=\mathrm{max}\left \{ \lambda(g), 1 \right \} =\lambda(g)$ by Lemma \ref{001}.
	\end{proof}

    \begin{corollary}\label{028}
		$\lambda(S_1)=1$.
	\end{corollary}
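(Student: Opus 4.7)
The plan is to apply Proposition \ref{006} with $m=2$. First, I would verify that $S_1$ has a $2$-linear projection $\{1,2\}$: the first two components $(x_1,\,x_2+o_1x_1^2)$ visibly form a triangular automorphism of $\mathbb{A}_\mathbb{C}^2$, so this is immediate from the definition.

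Next, I would check that, viewed over $K=\mathbb{C}(x_1,x_2)$, the remaining three components
\[
(x_3-x_1x_4+\tau_2,\ x_4+x_1x_5+x_2x_3+\tau_3,\ x_5+x_2x_4+\tau_4)
\]
form an affine automorphism of $\mathbb{A}_K^3$. Indeed, each coordinate is a polynomial of $K$-degree one in $x_3,x_4,x_5$: the only monomials involving $x_3,x_4,x_5$ that appear are $x_3$, $x_4$, $x_5$, $x_1x_4$, $x_1x_5$, $x_2x_3$, $x_2x_4$, and the $\tau_i(x_1,x_2)$ lie in $K$. Since $S_1$ itself is an automorphism, the remark following the definition of linear projection guarantees that this triple indeed defines an element of $\mathrm{Aut}(\mathbb{A}_K^3)$, which we have just seen is affine. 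This is exactly the hypothesis of Proposition \ref{006}, so we obtain $\lambda(S_1)=\lambda(g)$, where $g=(x_1,\,x_2+o_1x_1^2)$.

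The last step is to compute $\lambda(g)$ by direct iteration. A quick induction gives $g^r=(x_1,\,x_2+ro_1x_1^2)$, so $\deg(g^r)\leq 2$ for every $r\geq 1$, and therefore $\lambda(g)=\lim_{r\to\infty}(\deg g^r)^{1/r}=1$.

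I do not foresee any real obstacle; the proof amounts to recognizing the block structure of $S_1$ and then invoking Proposition \ref{006}. The only point that warrants a moment of attention is checking that each of the components $f_3,f_4,f_5$ is genuinely linear in $x_3,x_4,x_5$ over $K$ (the quadratic forms $\tau_i$ contribute only to the constant term in $K$). Once this is confirmed, the remainder is a direct application of the framework developed in Section 4.
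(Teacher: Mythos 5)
Your proof is correct and follows essentially the same route as the paper: both identify the $2$-linear projection $\{1,2\}$, observe that $(f_3,f_4,f_5)$ is affine over $K=\mathbb{C}(x_1,x_2)$, and apply Proposition \ref{006} to reduce to $g=(x_1,x_2+o_1x_1^2)$. Your explicit computation $g^r=(x_1,\,x_2+ro_1x_1^2)$, giving $\lambda(g)=1$, just spells out a step the paper leaves implicit.
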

	\begin{proof}
		Let $S=(f_1,f_2,f_3,f_4,f_5)$ and let $K=k(x_1,x_2)$ be the fraction field of $k[x_1,x_2]$. Then 
		$(f_3,f_4,f_5)\in \mathrm{Aut}(\mathbb{A}_K^3) $ is affine and $g=(x_1,x_2+o_1x_1^2)$. Hence $\lambda(S)=1$ by Proposition \ref{006}.
	\end{proof}

    \begin{proposition}
        $\lambda(S_2)$ is an algebraic integer of degree $\le 4$.
    \end{proposition}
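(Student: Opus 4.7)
The plan is to reduce the question to the four-dimensional quadratic case already handled by Corollary~\ref{029}. The key observation is that $S_2$ fixes its first coordinate, so its first component is simply $x_1$; hence $S_2$ admits the $1$-linear projection $\{1\}$, on which the induced automorphism is the identity $\mathrm{id}\in\mathrm{Aut}(\mathbb{A}^1_\mathbb{C})$. Setting $K=\mathbb{C}(x_1)$, the remark following the definition of linear projection ensures that $S_2':=(f_2,f_3,f_4,f_5)\in\mathrm{Aut}(\mathbb{A}^4_K)$, and then Proposition~\ref{027} yields
$$\lambda(S_2)=\lambda(S_2').$$

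Next I would verify that $S_2'$, viewed over $K$, is a quadratic automorphism of $\mathbb{A}^4_K$. Treating $x_1$ as a scalar in $K$, the monomials $ax_4^2$, $-2ax_4x_5$, and $ax_5^2$ contribute total degree exactly $2$ in $x_2,x_3,x_4,x_5$ to $f_2$, $f_3$, $f_4$ respectively, while the remaining terms (including $x_1x_2$, $x_1x_3$, and the constants in $K$ such as $o_ix_1^2$) have degree at most $1$; the last component $f_5=x_5+x_1x_4+o_4x_1^2$ is linear in $x_2,x_3,x_4,x_5$. Hence $\deg_K(S_2')=2$. Since $\mathrm{char}(K)=0\neq 2$, Corollary~\ref{029} applies and gives that $\lambda(S_2')$ is an algebraic integer of degree $\le 4$. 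Combining with the identity $\lambda(S_2)=\lambda(S_2')$ completes the proof.

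The only point requiring attention -- and what I would flag as the main potential obstacle -- is that Corollary~\ref{029} is being invoked not over $\mathbb{C}$ but over the transcendental extension $K=\mathbb{C}(x_1)$. This is harmless because Theorem~\ref{016} and its quadratic consequence Corollary~\ref{029} are proved over an arbitrary field whose characteristic is not $2$, and the reduction to the permutation-triangular forms on which the whole case analysis of Section~3 is carried out does not use any special property of the base field beyond this. Thus the reduction $\lambda(S_2)\mapsto\lambda(S_2')$ together with a direct application of Corollary~\ref{029} over $K$ suffices, and no further analysis of the explicit form of $S_2'$ is needed.
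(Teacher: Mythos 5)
Your proposal is correct and is essentially identical to the paper's own proof: the paper also applies Proposition~\ref{027} with $m=1$, $n=5$ to reduce to the quadratic automorphism $(f_2,f_3,f_4,f_5)\in\mathrm{Aut}(\mathbb{A}^4_K)$, $K=\mathbb{C}(x_1)$, and then invokes Corollary~\ref{029} over $K$. Your write-up simply makes explicit the verification that the reduced map has degree $2$ over $K$ and that $\operatorname{char}(K)\neq 2$, which the paper leaves implicit.
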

    \begin{proof}
        It is a direct consequence of Corollary \ref{029} and Proposition \ref{027} when $m=1$ and $n=5$.
    \end{proof}
        
	\bibliographystyle{alpha}
	\bibliography{Refshao.bib}
\end{document}